\title{Arthur's Conjectures and the Orbit Method for Real Reductive Groups}
\begin{document}
\maketitle

\begin{abstract}
    The first half of this article is expository---I will review, with examples, the main statements of the Langlands classification and Arthur's conjectures for real reductive groups as formulated in \cite{AdamsBarbaschVogan}. In the second half, I will turn my attention to the Orbit Method, a conjectural scheme for classifying irreducible unitary representations of a real reductive group. I will give a definition of the Orbit Method in the case when the group is complex. The main input is the theory of unipotent ideals and Harish-Chandra bimodules, developed in \cite{LMBM}. I will show that the Orbit Method I define is related to 
    Arthur's conjectures via a natural duality map. Finally, I will sketch a possible generalization of this Orbit Method for arbitrary real groups. 
\end{abstract}

\section{Introduction}

Let $G$ be a complex connected reductive algebraic group and let $\sigma$ be a real form of $G$ (i.e. an anti-holomorphic involution $\sigma: G \to G$). Consider the set
$$\Pi(G,\sigma) := \{\text{equivalence classes of irreducible admissible representations of } G^{\sigma}\}.$$
The local aim over $\RR$ of the Langlands correspondence is to parameterize this set. This problem was solved completely by Langlands, Knapp, and Zuckerman. More precisely, Langlands in \cite{Langlands} introduced some finite sets called $L$\emph{-packets} which partition $\Pi(G,\sigma)$. The representations in these packets were later parameterized by the work of Knapp and Zuckerman (\cite{KnappZuckerman1982}). Taken together, these results provide a complete paramaterization of $\Pi(G,\sigma)$, called the \emph{Langlands classification for real reductive groups}. In Section \ref{sec:LLCclassical}, I will recall the main statements of this classification, as formulated in \cite{AdamsBarbaschVogan}. 

Each irreducible representation $\pi \in \Pi(G,\sigma)$ can be realized as the unique irreducible quotient of a \emph{standard representation} (this is a representation which is induced from a certain (limit of) discrete series representation of a Levi subgroup of $G^{\sigma}$).
The standard and irreducible representations of $G^{\sigma}$ form two distinct bases, in natural bijection, for the Grothendieck group of finite-length representations of $G^{\sigma}$. A problem which is distinct from, but related to, the classification of $\Pi(G,\sigma)$ is to compute the matrix which relates these two bases (it turns out this problem is essentially equivalent to the problem of computing irreducible characters, see Section \ref{sec:ABV} below). The classical formulation of the Langlands classification does not address this problem (and it is not well-suited for doing so). In \cite{AdamsBarbaschVogan}, Adams, Barbasch, and Vogan introduced a reformulation of the Langlands classification which relates this change of basis matrix to the geometry of a certain algebraic variety of modified Langlands parameters. 

An irreducible admissible representation $\pi \in \Pi(G,\sigma)$ is \emph{unitary} if it admits a positive-definite invariant Hermitian form. Write
$$\Pi_u(G, \sigma) \subset \Pi(G,\sigma)$$
for the subset of irreducible unitary representations of $G^{\sigma}$. A basic problem in representation theory is to identify this subset. This problem is open---it is perhaps the most fundamental unsolved problem in the representation theory of Lie groups. There are two main paradigms for approaching this problem: \emph{Arthur's conjectures} and \emph{the Orbit Method}. Both seek to parameterize the unitary dual (or at least, a large chunk of it) in terms of Lie-theoretic data. 

In a bit more detail, both approaches predict the existence of certain finite sets (or packets) of irreducible unitary representations, which should satisfy various desiderata and should exhaust (a large portion of) $\Pi_u(G,\sigma)$. In the case of Arthur's conjectures, these packets are indexed by so-called \emph{Arthur parameters}---certain Lie-theoretic objects related to the Langlands dual group. In the case of the Orbit Method, these packets are indexed by (roughly speaking) finite covers of co-adjoint $G^{\sigma}$-orbits. 

In \cite{AdamsBarbaschVogan}, Adams, Barbasch, and Vogan proposed a definition of Arthur packets for arbitrary groups involving microlocal geometry on their space of Langlands parameters. Their packets have been shown to possess all of the expected properties, \emph{except for unitarity}, which remains an open question. I will review their construction in Section \ref{sec:Arthur}.

The Orbit Method picture is much less developed. The corresponding packets have not yet been defined in general. In Section \ref{sec:orbit}, I will define them in the case when $G^{\sigma}$ is complex. I will also sketch a generalization for arbitrary groups. These definitions are based on recent joint work with Ivan Losev and Dmitryo Matvieievksyi (\cite{LMBM}) on filtered quantizations of nilpotent covers.

There are some structural similarities between the predictions of Arthur's conjectures and those of the Orbit Method. So it is natural to look for a relationship between these two points of view. In that direction, I will define for complex groups a natural duality map
$$\mathsf{D}: \{\text{Arthur parameters}\} \to \{\text{covers of co-adjoint orbits}\} $$
which `intertwines' Arthur's conjectures and the Orbit Method, see Section \ref{sec:duality}.  


\subsection{Acknowledgments}

I would like to thank David Vogan for many helpful discussions about the orbit method and Jeffrey Adams for reviewing an earlier draft of this article. 
\section{Langlands classification}\label{sec:Langlands}

\subsection{Langlands classification: classical formulation}\label{sec:LLCclassical}

Let $G^{\vee}$ be the (complex reductive) dual group of $G$, and let $G^L$ be the $L$-group associated to the inner class of $\sigma$, see \cite[Section 2.1]{Borel1979}. If we write $\Gamma$ for the Galois group of $\CC/\RR$, there is a short exact sequence of algebraic groups
$$1 \to G^{\vee} \to G^L \to \Gamma \to 1$$
Let $W_{\RR}$ be the Weil group of $\RR$. Recall that $W_{\RR}$ is a (disconnected) Lie group generated by $\CC^{\times}$ and a single element $j$, subject to the relations
$$j^2 = 1, \qquad jzj = \bar{z}$$
So there is a short exact sequence of Lie groups
$$1 \to \CC^{\times} \to W_{\RR} \to \Gamma \to 1.$$
\begin{definition}\label{def:Lparameter}
A Langlands parameter for $G^L$ is a continuous homomorphism
$$\varphi: W_{\RR} \to G^L$$
such that 
\begin{itemize}
    \item[(i)] the image of $\varphi$ consists of semisimple elements, and
    \item[(ii)] the following diagram commutes
\begin{center}
    \begin{tikzcd}
    W_{\RR} \ar[rr,"\varphi"] \ar[dr] & & G^L\ar[dl]\\
    & \Gamma 
    \end{tikzcd}
\end{center}
\end{itemize}
Write $P(G^L)$ for the set of Langlands parameters for $G^L$ and write $\Phi(G^L)$ for the set of $G^{\vee}$-conjugacy classes in $P(G^L)$. A Langlands parameter is \emph{tempered} if the image of $\varphi$ is bounded. Write $P_{\mathrm{temp}}(G^L)$ for the set of tempered Langlands parameters and $\Phi_{\mathrm{temp}}(G^L)$ for the set of $G^{\vee}$-conjugacy classes in $P_{\mathrm{temp}}(G^L)$.
\end{definition}

Since $W_{\RR}$ is generated by $\CC^{\times}$ and $j$, a Langlands parameter is determined by its differential at the identity and its value at $j$. Its differential at the identity is given by a pair of commuting semisimple elements in $\fg^{\vee}$ (encoding the holomorphic and anti-holomorphic components of the differential) and its value at $j$ is a semisimple element in the non-identity component of $G^L$. The requirement that these elements determine a group homomorphism imposes a small number of additional conditions. The result of this analysis is a very concrete Lie-theoretic description of Langlands parameters.

\begin{prop}[Proposition 5.6, \cite{AdamsBarbaschVogan}]\label{prop:params}
There is a natural $G^{\vee}$-equivariant bijection between $P(G^L)$ and pairs $(y,\lambda)$ such that
\begin{itemize}
    \item[(i)] $y$ is a semisimple element in $G^L \setminus G^{\vee}$.
    \item[(ii)] $\lambda$ is a semisimple element in $\fg^{\vee}$.
    \item[(iii)] $y^2=\exp(2\pi i \lambda)$.
    \item[(iv)] $[\lambda, \Ad(y)\lambda]=0$.
\end{itemize}
The map $\varphi \mapsto (\lambda(\varphi),y(\varphi))$ is defined as follows: $\lambda(\varphi)$ is the holomorphic part of $d\varphi: \CC \to \fg^{\vee}$ and $y(\varphi) = \exp(\pi i \lambda(\varphi))\phi(j)$. 

\end{prop}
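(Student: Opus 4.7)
The plan is to analyze a continuous homomorphism $\varphi: W_{\RR} \to G^L$ by using the presentation of $W_{\RR}$: such a $\varphi$ is determined by its restriction to the connected abelian subgroup $\CC^{\times}$ together with its value at $j$. Since the Lie algebra of $\CC^{\times}$ is $\CC$ (as a real vector space), the restriction $\varphi|_{\CC^{\times}}$ is captured by its differential, which decomposes canonically into $\CC$-linear and conjugate-linear parts $w \mapsto \lambda w + \mu \bar{w}$ with $\lambda, \mu \in \fg^{\vee}$. Semisimplicity of the image forces $\lambda, \mu$ to be semisimple; commutativity of $\CC^{\times}$ forces $[\lambda, \mu] = 0$; and the Weil relation $j z j^{-1} = \bar z$ forces $\Ad(\varphi(j)) \lambda = \mu$. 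By condition (ii) of Definition~\ref{def:Lparameter}, $\varphi(j)$ lies in $G^L \setminus G^{\vee}$ and is semisimple.

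The forward map sends $\varphi$ to the pair $(\lambda, y)$ with $\lambda$ as above and $y := \exp(\pi i \lambda) \varphi(j)$. Conditions (i) and (ii) of the proposition are immediate from the preceding discussion. For (iii), I would compute
$$y^2 = \exp(\pi i \lambda) \left[\varphi(j) \exp(\pi i \lambda) \varphi(j)^{-1}\right] \varphi(j)^2 = \exp(\pi i(\lambda + \mu))\, \varphi(-1),$$
and then observe that $\varphi(-1) = \exp(d\varphi(\pi i)) = \exp(\pi i(\lambda - \mu))$, so the $\mu$-terms cancel to yield $y^2 = \exp(2\pi i \lambda)$. For (iv), since $[\lambda, \mu]=0$ we have $\Ad(y)\lambda = \Ad(\exp(\pi i \lambda))\mu = \mu$, hence $[\lambda, \Ad(y)\lambda] = [\lambda, \mu] = 0$.

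For the inverse direction, given a pair $(y, \lambda)$ satisfying (i)--(iv), set $\mu := \Ad(y)\lambda$, define $\varphi|_{\CC^{\times}}$ by $\varphi(e^w) := \exp(\lambda w + \mu \bar{w})$, and $\varphi(j) := \exp(-\pi i \lambda)\, y$. Well-definedness of $\varphi|_{\CC^{\times}}$ requires $\exp(2\pi i(\lambda - \mu)) = 1$, which follows from (iii) by applying $\Ad(y)$ to $y^2 = \exp(2\pi i \lambda)$: this yields $y^2 = \exp(2\pi i \mu)$, so $\exp(2\pi i \lambda) = \exp(2\pi i \mu)$. One then checks the Weil relations directly: $\varphi(j)\varphi(z)\varphi(j)^{-1} = \varphi(\bar z)$ reduces to $\Ad(y)\lambda = \mu$ (which holds by construction), and $\varphi(j)^2 = \varphi(-1)$ unwinds to condition (iii). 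The two constructions are visibly mutually inverse, and both intertwine $G^{\vee}$-conjugation, yielding the asserted equivariant bijection. The only real obstacle is careful bookkeeping of the $\exp(\pi i \lambda)$ twist, which is introduced precisely so that (iii) takes the clean symmetric form $y^2 = \exp(2\pi i \lambda)$ rather than an asymmetric expression involving both $\lambda$ and $\mu$.
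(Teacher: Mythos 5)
Your argument is correct and follows essentially the route of \cite[Proposition 5.6]{AdamsBarbaschVogan} (which the present paper cites without reproducing): decompose $d\varphi|_{\CC^\times}$ into its $\CC$-linear and $\CC$-antilinear parts $\lambda,\mu$, translate the Weil-group relations into conditions on $(\lambda,\mu,\varphi(j))$, and absorb $\mu$ via the twist $y = \exp(\pi i\lambda)\varphi(j)$ so that condition (iii) takes the symmetric form $y^2 = \exp(2\pi i\lambda)$.

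Two small points deserve to be made explicit. First, the paper's presentation of $W_{\RR}$ states $j^2 = 1$; this is a typo, since the real Weil group is the \emph{non-split} extension of $\Gamma$ by $\CC^\times$, with $j^2 = -1$. Your computation silently uses the correct relation $\varphi(j)^2 = \varphi(-1)$; had you used $j^2 = 1$ as printed, you would obtain $y^2 = \exp(\pi i(\lambda+\mu))$, and (iii) would fail since only $\exp(2\pi i(\lambda-\mu))=1$ is guaranteed. Flagging the discrepancy would prevent the computation from appearing inconsistent with the definition as stated. Second, you assert that conditions (i) and (ii) of the proposition are immediate, but the semisimplicity of $y$ is not obvious a priori: $\exp(\pi i\lambda)$ and $\varphi(j)$ need not commute, so $y$ is not manifestly a product of commuting semisimple elements. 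The clean argument is to establish (iii) first and then observe that $y^2 = \exp(2\pi i\lambda)$ is semisimple, so the unipotent Jordan factor $u$ of $y$ satisfies $u^2 = 1$, forcing $u = 1$ in characteristic zero. The same squaring device is also needed in the inverse direction to verify that the reconstructed $\varphi$ satisfies condition (i) of Definition~\ref{def:Lparameter}: for any $z$, $\varphi(jz)^2 = \varphi(-z\bar z) \in \varphi(\CC^\times)$ is semisimple, hence so is $\varphi(jz)$.
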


\begin{theorem}[\cite{Langlands}]\label{thm:LLCclassical}
For each Langlands parameter $\varphi \in P(G^L)$, there is an associated finite (possibly empty) set $\Pi_{\varphi}(G,\sigma) \subset \Pi(G,\sigma)$ called the \emph{L-packet} for $\varphi$ such that
\begin{itemize}
    \item[(i)] Two packets are equal (if parameters are conjugate) or disjoint (otherwise).
    \item[(ii)] The packets partition $\Pi(G,\sigma)$.
\end{itemize}
\end{theorem}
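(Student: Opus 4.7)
The strategy is to reduce to tempered parameters via the Langlands decomposition, construct tempered L-packets from (limits of) discrete series packets of cuspidal Levi subgroups by unitary induction, and finally appeal to the Langlands subquotient theorem to obtain the partition. First, every parameter $\varphi \in P(G^L)$ admits a canonical decomposition $\varphi = (\varphi_t,\nu)$ as follows: using the Jordan decomposition of $\lambda(\varphi) \in \fg^{\vee}$ from Proposition \ref{prop:params}, one isolates a strictly positive real part $\nu$ whose centralizer cuts out a Levi $\bfL \subset G$ with L-group $\bfL^L \subset G^L$. Then $\varphi$ factors through $\bfL^L$, with tempered part $\varphi_t$ a tempered parameter for $\bfL^L$, reducing the construction of $\Pi_\varphi(G,\sigma)$ to the construction of the tempered packet $\Pi_{\varphi_t}(\bfL,\sigma_\bfL)$.

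For the tempered case, every tempered $\varphi$ factors through a cuspidal Levi $\bfL^L$ as an elliptic tempered parameter $\varphi_{\mathrm{ell}}$ (image bounded and not contained in any proper Levi's L-group). Harish-Chandra's classification of discrete series attaches to such an elliptic parameter a finite set of (limits of) discrete series representations, distributed among the strong real forms in the inner class of $\sigma$ and parametrized by regular characters of a compact Cartan subgroup that match the pair $(y,\lambda)$ of Proposition \ref{prop:params}. The elliptic packet on $G^\sigma$ consists of those members realized on $G^\sigma$ itself, which may be empty. For a general tempered $\varphi$, one takes $\Pi_\varphi(G,\sigma)$ to be the set of irreducible constituents of unitary parabolic inductions of the members of $\Pi_{\varphi_{\mathrm{ell}}}(\bfL,\sigma_\bfL)$; the Knapp--Zuckerman irreducibility theorem guarantees that these inductions remain irreducible, so the packet is well-defined.

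Returning to the general case, one defines $\Pi_\varphi(G,\sigma)$ as the set of unique irreducible (Langlands) quotients of the standard modules obtained by inducing the representations in $\Pi_{\varphi_t}(\bfL,\sigma_\bfL)$, twisted by the positive character associated to $\nu$, from the real parabolic determined by the positivity of $\nu$. Disjointness and exhaustiveness now follow from the Langlands subquotient theorem: every $\pi \in \Pi(G,\sigma)$ is the unique irreducible quotient of a standard module whose Langlands data $(\bfL,\pi_t,\nu)$ is determined up to $G^\sigma$-conjugacy, which corresponds under the above construction to $G^{\vee}$-conjugacy of the parameter on the dual side.

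The principal obstacle is the discrete series step: one must match Harish-Chandra's parametrization by regular characters of compact Cartan subgroups with the dual-group data of Proposition \ref{prop:params}, and one must track which members of the resulting packet actually belong to the specified real form $G^\sigma$ rather than another strong inner form in the inner class. Both points are delicate and are handled systematically in \cite{AdamsBarbaschVogan} via the formalism of strong real forms and the associated cover of parameter data, which is also the machinery needed to bring character-theoretic information (the change-of-basis matrix between standard and irreducible modules) into the picture.
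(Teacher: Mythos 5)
The paper does not actually prove this theorem; it defers to Langlands \cite{Langlands} and Borel \cite{Borel1979}, remarking only that the construction reduces via parabolic induction and formation of discrete series to the case of tori. Your sketch follows the standard Langlands argument and has the correct overall structure: reduce to tempered parameters via the hyperbolic part of $\lambda(\varphi)$ and the associated Levi subgroup of $G$ dual to the centralizer of that hyperbolic part, reduce the tempered case to elliptic parameters on cuspidal Levis, attach (limits of) discrete series via Harish-Chandra's classification, recover general tempered packets by unitary parabolic induction, and use the Langlands subquotient theorem for the non-tempered case, disjointness, and exhaustiveness.

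There is, however, one genuine error in the tempered step. You claim that ``the Knapp--Zuckerman irreducibility theorem guarantees that these inductions remain irreducible, so the packet is well-defined.'' This is false: unitary parabolic induction from a (limit of) discrete series on a cuspidal Levi is in general reducible, and this reducibility---controlled by the Knapp--Stein $R$-group---is exactly what makes tempered $L$-packets larger than singletons beyond the elliptic case. What Knapp--Zuckerman \cite{KnappZuckerman1982} actually prove is a classification theorem: every irreducible tempered representation occurs, with multiplicity one, as a constituent of such an induction, and two inductions share a constituent precisely when their inducing data are conjugate. The tempered $L$-packet $\Pi_{\varphi}(G,\sigma)$ is by definition the set of \emph{all} irreducible constituents, and the well-definedness you need---independence of the set of constituents from the choice of parabolic subgroup---follows from the existence of Knapp--Stein intertwining operators, not from irreducibility of the induced modules. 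With that correction, the rest of your sketch (factorization through $\bfL^L$, the Langlands quotient construction, uniqueness of Langlands data up to conjugacy, and the remark that matching discrete series to dual-group data and tracking strong real forms is what \cite{AdamsBarbaschVogan} systematizes) agrees with the standard argument.
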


The construction of the packets $\Pi_{\varphi}(G,\sigma)$ is complicated and technical, we refer the reader to \cite[Section 3]{Langlands} or \cite[Section III.10]{Borel1979}. The construction partially reduces (via parabolic induction and formation of discrete series) to the construction for tori, where it can be verified by hand. 

The next proposition describes the constistuents of $\Pi_{\varphi}(G,\sigma)$ in terms of the pair $(y,\lambda)$. It is an easy consequence of the construction of $\Pi_{\varphi}(G,\sigma)$. 

\begin{prop}\label{prop:propsofylambda}
Suppose $\varphi \in P(G^L)$ and let $(y,\lambda) = (y(\varphi),\lambda(\varphi))$ be the corresponding pair (cf. Proposition \ref{prop:params}). Choose maximal torus $H^{\vee} \subset G^{\vee}$ such that $\lambda \in \mathfrak{h}^{\vee}$ and $\Ad(y)$ normalizes $H^{\vee}$. Then the following are equivalent
\begin{itemize}
    \item[(i)] some representation in $\Pi_{\varphi}(G,\sigma)$ is a (limit of) discrete series.
    \item[(ii)] all representations in $\Pi_{\varphi}(G,\sigma)$ are (limits of) discrete series.
    \item[(iii)] $\Ad(y)$ acts by inversion on $H^{\vee}$.
\end{itemize}
and the following are equivalent
\begin{itemize}
    \item[(i)] some representation in $\Pi_{\varphi}(G,\sigma)$ is tempered.
    \item[(ii)] all representations in $\Pi_{\varphi}(G,\sigma)$ are tempered.
    \item[(iii)] $\lambda+\Ad(y)\lambda \in X_*(H^{\vee}) \otimes_{\ZZ} i\RR$.
    \item[(iv)] $\varphi$ is tempered (cf. Definition \ref{def:Lparameter}).
\end{itemize}
Note that $\lambda \in \fh^{\vee} \simeq \fh^*$ determines an infinitesimal character for $\fg$ by means of the Harish-Chandra isomorphism. All of the representations in $\Pi_{\varphi}(G,\sigma)$ have infinitesimal character $\lambda$.
\end{prop}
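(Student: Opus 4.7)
The plan is to translate each condition on the L-packet $\Pi_\varphi(G,\sigma)$ into an explicit condition on the pair $(y,\lambda)$ via Proposition \ref{prop:params}. The equivalences (i) $\Leftrightarrow$ (ii) in both parts, together with the equivalence to a property of $\varphi$ itself, are standard consequences of Langlands' construction: the packet is assembled via parabolic induction from a packet of (limits of) discrete series on a Levi subgroup attached to $\varphi$, and both temperedness and the discrete series property are invariants of $\varphi$ alone, not of individual representations in the packet. So the real content is to translate ``$\varphi$ is tempered'' and ``$\varphi$ is elliptic'' into linear-algebraic conditions on $(y,\lambda)$.

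For the tempered equivalence (iii) $\Leftrightarrow$ (iv), I would unpack boundedness of $\varphi(W_\RR)$. Since $W_\RR$ is generated by $\CC^\times$ and $j$, and the compact circle $U(1) \subset \CC^\times$ together with $\{1,j\}$ has automatically bounded image, boundedness reduces to that of $\varphi(\RR^+)$. The differential $d\varphi|_\CC\colon \CC \to \fg^\vee$ is $\RR$-linear, of the form $z \mapsto \lambda z + \mu \bar z$ with $\lambda$ the holomorphic part by Proposition \ref{prop:params}; the Weil-group relation $jzj^{-1} = \bar z$ forces $\Ad(\varphi(j))\lambda = \mu$, and combining $\varphi(j) = \exp(-\pi i \lambda) y$ with the commutation $[\lambda, \Ad(y)\lambda] = 0$ of Proposition \ref{prop:params}(iv) yields $\mu = \Ad(y)\lambda$. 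Hence $\varphi(e^t) = \exp(t(\lambda + \Ad(y)\lambda))$ for $t \in \RR$, and boundedness of this one-parameter subgroup in $H^\vee$ is equivalent to $\lambda + \Ad(y)\lambda$ lying in the Lie algebra $X_*(H^\vee) \otimes_\ZZ i\RR$ of the maximal compact subgroup of $H^\vee$, which is condition (iii).

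For the discrete series equivalence, I would use the standard fact that $\Pi_\varphi(G,\sigma)$ contains (a limit of) discrete series iff $\varphi$ is \emph{elliptic}, meaning the connected centralizer of $\varphi(W_\RR)$ in $G^\vee$ is contained in the center $Z(G^\vee)$. Since $\lambda \in \fh^\vee$ and $\Ad(y)$ normalizes $H^\vee$, the intersection of this centralizer's Lie algebra with $\fh^\vee$ is the $\Ad(y)$-fixed subspace $(\fh^\vee)^{\Ad(y)}$. Since $y^2 = \exp(2\pi i \lambda) \in H^\vee$, $\Ad(y)$ is an involution of $H^\vee$, so triviality of $(\fh^\vee)^{\Ad(y)}$ modulo the center is equivalent to $\Ad(y)$ acting by inversion on $H^\vee$, giving (iii). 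Finally, the infinitesimal character assertion is part of the normalization of the classification: parabolic induction preserves infinitesimal characters, and in the (limit of) discrete series case $\lambda$ is the Harish-Chandra parameter by construction, so via the Harish-Chandra isomorphism every member of $\Pi_\varphi(G,\sigma)$ has infinitesimal character $\lambda$. The main subtlety throughout is setting up the dictionary between the Weil-group description of $\varphi$ and the $(y,\lambda)$ data; once that is in hand (as in Proposition \ref{prop:params}), each step is essentially formal.
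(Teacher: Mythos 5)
The paper offers no proof of this proposition; it simply observes that it follows from Langlands' construction of $L$-packets (which proceeds by attaching a Cartan subgroup of $G^\sigma$ to the pair $(y,\lambda)$ and forming parabolic induction from a (limit of) discrete series on the Levi factor). So I will evaluate your argument on its own terms. Your treatment of the tempered equivalence is correct and complete: the identification $\mu=\Ad(y)\lambda$, the reduction of boundedness to $\varphi(\RR_{>0})$, and the translation into $\lambda+\Ad(y)\lambda\in X_*(H^\vee)\otimes i\RR$ are all exactly as they should be, and the infinitesimal character remark is fine.

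The discrete-series equivalence has a genuine gap. You assert that $\Pi_\varphi$ contains a (limit of) discrete series iff $\varphi$ is elliptic in the sense that $Z_{G^\vee}(\varphi)^\circ\subset Z(G^\vee)$. This is the correct criterion for \emph{discrete series} (regular $\lambda$), but it fails for \emph{limits}: if $\lambda$ is singular and $\Ad(y)$ acts by inversion on $H^\vee$, then $\fm^\vee:=\fz_{\fg^\vee}(\lambda,\mu)=\fz_{\fg^\vee}(\lambda)$ has roots, $\Ad(y)$ swaps $\fg^\vee_\alpha$ and $\fg^\vee_{-\alpha}$ for each such root $\alpha$, and therefore $(\fm^\vee)^{\Ad(y)}$ is positive-dimensional. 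So $Z_{G^\vee}(\varphi)^\circ$ is \emph{not} contained in the center even though $\Pi_\varphi$ consists of limits of discrete series. (Concretely, for $\mathrm{SL}(2,\RR)$, with $\lambda=0$ and $y$ the image of the antidiagonal matrix in $G^\vee=\mathrm{PGL}(2)$, the packet is the two limits of discrete series, yet $Z_{G^\vee}(\varphi)^\circ$ is a one-dimensional torus.) A second, related flaw: even where ellipticity is the right criterion, the step from ``$(\fh^\vee)^{\Ad(y)}$ is central'' to ``the centralizer of $\varphi$ is central'' does not follow, because the Lie algebra of $Z_{G^\vee}(\varphi)=(M^\vee)^{\Ad(y)}$ need not meet $\fh^\vee$ in a Cartan of itself (the $\mathfrak{so}(2)\subset\mathfrak{sl}(2)$ picture above). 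What rescues the regular case is precisely that $\fm^\vee=\fh^\vee$ there, and that is what your argument implicitly uses. The clean route, and presumably what the author has in mind by ``easy consequence of the construction,'' is to observe that $(y,\lambda)$ determines a maximal torus $T^\sigma$ of $G^\sigma$ and that Langlands' construction produces (limits of) discrete series exactly when $T^\sigma$ is compact, which is by definition the condition that $\Ad(y)$ acts by inversion on $H^\vee$; no centralizer computation is needed and limits are covered automatically.
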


\begin{example}\label{ex:PGL1}
Let $G = \mathrm{PGL}(2,\CC)$ and let $\sigma_s$ be the split real form of $G$. Then $G^{\sigma_s} = \mathrm{PGL}(2,\RR)$, $G^{\vee} = \mathrm{SL}(2,\CC)$, and $G^L = G^{\vee} \times \Gamma$. Let $H^{\vee} \subset G^{\vee}$ be the diagonal torus. Then
$$N_{G^{\vee}}(H^{\vee}) = \{\begin{pmatrix}x& 0\\0 & x^{-1} \end{pmatrix}\} \bigsqcup\{ \begin{pmatrix}0 & x^{-1}\\-x & 0 \end{pmatrix}\}$$
By Proposition \ref{prop:params}, the set $\Phi(G^L)$ of $G^{\vee}$-conjugacy classes of Langlands parameters is in bijection with pairs
$$(y,\lambda) \in N_{G^{\vee}}(H^{\vee}) \times  \fh^{\vee}, \qquad y^2 = \exp(2\pi i \lambda)$$
up to conjugation by $G^{\vee}$. In the table below, we list all such pairs. For each $(y,\lambda)$ we record the corresponding $L$-packet in $\Pi(G,\sigma)$ (note: each $L$-packet is a singleton).

\begin{center}
    \begin{tabular}{|l|l|l|} \hline
        $\lambda$ & $y$ & $\Pi_{\varphi}(G,\sigma)$\\ \hline
         $\begin{pmatrix}a& 0\\0 & -a \end{pmatrix}$ for $a = \frac{1}{2}, \frac{3}{2},...$ & $\begin{pmatrix}i & 0\\0 & -i\end{pmatrix}$ &  spherical finite-dimensional \\ 
         & & of dimension $a+1/2$ \\ \hline
         
         $\begin{pmatrix}a& 0\\0 & -a \end{pmatrix}$ for $a = \frac{1}{2}, \frac{3}{2},...$ & $\begin{pmatrix}-i & 0\\0 & i\end{pmatrix}$ & non-spherical finite-dimensional \\
         & & of dimension $a+1/2$ \\ \hline
         
        $\begin{pmatrix}a& 0\\0 & -a \end{pmatrix}$ for $a = \frac{1}{2}, \frac{3}{2},...$ & $\begin{pmatrix}0 & 1\\-1 & 0\end{pmatrix}$ & discrete series \\ 
        & & of infinitesimal character $a$ \\ \hline
        
         $\begin{pmatrix}a& 0\\0 & -a \end{pmatrix}$ for $a \notin \{\frac{1}{2},\frac{3}{2},...\}$ & $\begin{pmatrix}\exp(\pi i a) & 0\\0 & \exp(-\pi i a)\end{pmatrix}$ & spherical principal series \\ 
         & & of infinitesimal character $a$ \\ \hline
         
        $\begin{pmatrix}a& 0\\0 & -a \end{pmatrix}$ for $a \notin \{\frac{1}{2},\frac{3}{2},...\}$  & $\begin{pmatrix}-\exp(\pi i a) & 0\\0 & -\exp(-\pi i a)\end{pmatrix}$ & non-spherical principal series \\ 
        & & of infinitesimal character $a$ \\ \hline
         \end{tabular}
\end{center}
Note: to avoid double-counting, one should assume in the final two rows that $\mathrm{Re}(a) \geq 0$ and $\mathrm{Im}(a) \geq 0$ if $\mathrm{Re}(a)=0$. 

\end{example}

Strictly speaking, Theorem \ref{thm:LLCclassical} is \emph{not} a parameterization of $\Pi(G,\sigma)$. Rather, it is a parameterization of the set of $L$-packets, which partition the set $\Pi(G,\sigma)$. This issue can be resolved by considering a finer set of parameters. Assume $G$ is adjoint to simplify the statements (if $G$ is not adjoint, there are several complications, which we would prefer to avoid in this survey. For details on the general case, we refer the reader to \cite{AdamsBarbaschVogan}).

\begin{definition}[Definition 5.11, \cite{AdamsBarbaschVogan}]
A \emph{complete Langlands parameter} for $G^L$ is a pair $(\varphi,\tau)$ consisting of a Langlands parameter $\varphi \in P(G^L)$ and an irreducible representation $\tau$ of the component group of $Z_{G^{\vee}}(\varphi)$. Write $P^c(G^L)$ for the set of complete Langlands parameters for $G^L$ and $\Phi^c(G^L)$ for the set of $G^{\vee}$-conjugacy classes in $P^c(G^L)$.
\end{definition}

For the cleanest formulation of the Langlands classification, we must consider several real forms of $G$ at once. Write $[\sigma]$ for the inner class of $\sigma$ and form the disjoint union
$$\Pi(G,[\sigma]) := \bigsqcup_{\sigma' \in [\sigma]} \Pi(G,\sigma')$$
\begin{theorem}[Theorems 12.3, 12.9, \cite{AdamsBarbaschVogan}]\label{thm:LLCclassicalcomplete}
There is a bijective correspondence
$$\pi: \Phi^c(G^L) \xrightarrow{\sim} \Pi(G,[\sigma])$$
For a fixed real form $\sigma$ and Langlands parameter $\varphi$, we have
$$\Pi_{\varphi}(G,\sigma) = \pi\{(\varphi,\tau)\} \cap \Pi(G,\sigma).$$
\end{theorem}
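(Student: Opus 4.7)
The plan is to reduce Theorem \ref{thm:LLCclassicalcomplete} to a packet-by-packet statement, and then apply the strong real form formalism of \cite{AdamsBarbaschVogan}. By Theorem \ref{thm:LLCclassical}, for each real form $\sigma' \in [\sigma]$ the $L$-packets $\Pi_{\varphi}(G,\sigma')$ partition $\Pi(G,\sigma')$. Taking disjoint unions over $\sigma' \in [\sigma]$ partitions $\Pi(G,[\sigma])$ into ``enlarged $L$-packets''
$$\Pi_{\varphi}(G,[\sigma]) := \bigsqcup_{\sigma' \in [\sigma]} \Pi_{\varphi}(G,\sigma'),$$
so both claims of the theorem reduce to exhibiting, for every $\varphi \in P(G^L)$, a natural bijection between $\Pi_{\varphi}(G,[\sigma])$ and the set $\widehat{A_{\varphi}}$ of irreducible representations of the component group $A_{\varphi} := Z_{G^{\vee}}(\varphi)/Z_{G^{\vee}}(\varphi)^0$. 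This bijection must depend only on the $G^{\vee}$-conjugacy class of $\varphi$, which is automatic since $A_{\varphi}$ is well-defined up to conjugation.

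For the packet-level bijection I would invoke the strong real form machinery. A strong real form in $[\sigma]$ is an element $\delta$ in a canonical $\Gamma$-extension of $G$ with $\delta^2$ central; its $G$-conjugacy class refines the underlying real form $\sigma_{\delta} \in [\sigma]$. A complete parameter $(\varphi,\tau)$ singles out a unique compatible strong real form through the central character of $\tau$, restricted from $A_{\varphi}$ to the image of the center of $G^{\vee}$, and one then produces $\pi(\varphi,\tau) \in \Pi(G,\sigma_{\delta})$ by the usual three-step Langlands procedure. First, using Proposition \ref{prop:propsofylambda}, one reduces an arbitrary $\varphi$ to a tempered parameter on a Levi subgroup, with $\pi(\varphi,\tau)$ appearing as the unique irreducible quotient of a standard module. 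Next, tempered parameters reduce to (limits of) discrete series via real parabolic induction from a cuspidal Levi. Finally, the discrete case is handled by the Harish-Chandra parameterization, which matches irreducibles of $A_{\varphi}$ with Weyl-group orbits of regular characters on a compact Cartan. At each step one must compute how component groups transform under induction, a routine but combinatorially intricate exercise.

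The main obstacle, and the conceptual heart of the refinement in \cite{AdamsBarbaschVogan}, is verifying that as $\delta$ ranges over strong real forms in $[\sigma]$ and $\tau$ ranges over $\widehat{A_{\varphi}}$, each complete parameter is produced \emph{exactly once} and no irreducible admissible representation of any $G^{\sigma'}$ is missed. This is precisely why one must work with the whole inner class rather than a single real form: a fixed group $G^{\sigma'}$ generally fails to see those representations in $\widehat{A_{\varphi}}$ whose central character obstructs realization over $\sigma'$, and these missing $\tau$'s are picked up by the other strong real forms within $[\sigma]$. The duality between strong real forms on the real side and characters of $A_{\varphi}$ on the dual side that underlies this counting reduces, via the inductive steps described above, to the case of tori, where it can be verified by hand directly from the $(y,\lambda)$ description of Proposition \ref{prop:params}.
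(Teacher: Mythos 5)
The paper does not prove this theorem; it is cited wholesale from Theorems 12.3 and 12.9 of \cite{AdamsBarbaschVogan}, so there is no in-paper argument to compare your proposal against. That said, your sketch has the correct high-level shape of the ABV proof: reducing to a per-packet bijection $\Pi_{\varphi}(G,[\sigma]) \leftrightarrow \widehat{A_{\varphi}}$, explaining why the whole inner class must be treated simultaneously (a fixed real form misses some characters of $A_{\varphi}$), and factoring the argument through the standard chain of reductions---standard modules, then tempered parameters, then discrete series via Knapp--Zuckerman, then tori.

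Two imprecisions are worth flagging. First, the mechanism you describe for singling out a strong real form---``the central character of $\tau$, restricted from $A_{\varphi}$ to the image of the center of $G^{\vee}$''---is close but not literal. In ABV the relevant central subgroup of $A_{\varphi}$ involves the $\Gamma$-fixed part of the center, and for non-adjoint $G$ one must replace $G^{\vee}$ by the algebraic universal cover ${}^{\vee}G^{\mathrm{alg}}$, because $A_{\varphi}$ computed in $G^{\vee}$ itself is generally too small to separate strong real forms. The paper sidesteps these issues by assuming $G$ adjoint (so strong real forms and ordinary real forms coincide, and the universal cover is unnecessary), but your proposal invokes the general strong-real-form machinery without accounting for that distinction. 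Second, the phrase ``reduces to tori'' covers essentially all the technical content: tracking $A_{\varphi}$ through real and cohomological parabolic induction is where the bulk of the work in ABV lives, and your sketch acknowledges this as ``routine but combinatorially intricate'' without supplying it. As a road-map of the cited proof the proposal is faithful; as an argument it has the same epistemic status as the paper's own citation, namely a pointer to where the real work is done.
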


\begin{example}\label{ex:PGL2}
Let $G=\mathrm{PGL}_2(\CC)$ and let $[\sigma]$ be the unique inner class. We will use the notation of Example \ref{ex:PGL1}. There are two real forms in $[\sigma]$, the split real form $\sigma_s$ and the compact real form $\sigma_c$. The corresponding Lie groups are $G_s = G^{\sigma_s} = \mathrm{PGL}(2,\RR)$ and $G_c = G^{\sigma_c} = \mathrm{SU}(2)/\{\pm \mathrm{Id}\} \simeq \mathrm{SO}(3,\RR)$. We computed the $L$-packets for $\sigma_s$ in Example \ref{ex:PGL1}. Below we compute the $L$-packets for $\sigma_c$ to illustrate Theorem \ref{thm:LLCclassicalcomplete} (note: each $L$-packet is a singleton).

\begin{table}[H]
\tiny
    \begin{tabular}{|l|l|l|l|l|} \hline
        $\lambda$ & $y$ & $\Pi_{\varphi}(G_s)$ & $\Pi_{\varphi}(G_c)$ & $Z_{G^{\vee}}(y,\lambda)$\\ \hline
          $\begin{pmatrix}a& 0\\0 & -a \end{pmatrix}$ for $a = \frac{1}{2}, \frac{3}{2},...$ & $\begin{pmatrix}i & 0\\0 & -i\end{pmatrix}$ & spherical fin-dim& $\emptyset$ & $H^{\vee}$\\
          & &  of dim $a+1/2$ & & \\ \hline
         
          $\begin{pmatrix}a& 0\\0 & -a \end{pmatrix}$ for $a = \frac{1}{2}, \frac{3}{2},...$ & $\begin{pmatrix}-i & 0\\0 & i\end{pmatrix}$ & non-spherical fin-dim& $\emptyset$ & $H^{\vee}$\\ 
          & & of dim $a+1/2 \ \otimes \mathrm{sgn}$ & & \\ \hline
         
         $\begin{pmatrix}a& 0\\0 & -a \end{pmatrix}$ for $a = \frac{1}{2}, \frac{3}{2},...$ & $\begin{pmatrix}0 & 1\\-1 & 0\end{pmatrix}$ & discrete series & fin-dim & $\{\pm \mathrm{Id}\}$\\
         & & of infl char $a$ & of dim $a+1/2$ & \\ \hline
        
          $\begin{pmatrix}a& 0\\0 & -a \end{pmatrix}$ for $a \notin \{\frac{1}{2},\frac{3}{2},...\}$ & $\begin{pmatrix}\exp(\pi i a) & 0\\0 & \exp(-\pi i a)\end{pmatrix}$ & spherical principal series & $\emptyset$ & $H^{\vee}$ or $G^{\vee}$\\ 
          & & of infl char $a$ & & \\ \hline
         
         $\begin{pmatrix}a& 0\\0 & -a \end{pmatrix}$ for $a \notin \{\frac{1}{2},\frac{3}{2},...\}$  & $\begin{pmatrix}-\exp(\pi i a) & 0\\0 & -\exp(-\pi i a)\end{pmatrix}$ & non-spherical principal series & $\emptyset$ & $H^{\vee}$ or $G^{\vee}$\\ 
         & & of infl char $a$ & & \\ \hline
         \end{tabular}
\end{table}
Note: to avoid double-counting, one should assume in the final two rows that $\mathrm{Re}(a) \geq 0$ and $\mathrm{Im}(a) \geq 0$ if $\mathrm{Re}(a)=0$. 
\end{example}

\vspace{5mm}

\subsection{Langlands classification: ABV formulation}\label{sec:ABV}

A fundamental invariant of an admissible representation is its \emph{distribution character}. A natural question is the following:

\begin{question}\label{question1}
Let $\pi \in \Pi(G,\sigma)$. Can we compute the character of $\pi$ from its (complete) Langlands parameter?
\end{question}

In \cite{Langlands}, Langlands introduced the notion of a \emph{standard representation}. This is a finite-length admissible representations which is parabolically induced from a certain (limit of) relative discrete series representation (see \cite[Section 3]{Langlands} or \cite[Chapter 11]{AdamsBarbaschVogan}). Assuming still that $G$ is adjoint, write
\begin{align*}
M(G,\sigma) &:= \{\text{equivalence classes of standard representations of } G^{\sigma}\}\\
M(G,[\sigma]) &:= \bigsqcup_{\sigma' \in [\sigma]} M(G,\sigma')
\end{align*}
Each standard representation has a unique irreducible quotient. This defines a bijection
$$M(G,\sigma) \xrightarrow{\sim} \Pi(G,\sigma).$$
Composing with the bijection of Theorem \ref{thm:LLCclassicalcomplete}, we get a further bijection
$$M: \Phi^c(G^L) \xrightarrow{\sim} M(G,[\sigma])$$
Write
\begin{align*}
K(G,\sigma) &= \text{Grothendieck group of finite-length representations of } G^{\sigma}\\
K(G,[\sigma]) &= \bigoplus_{\sigma' \in [\sigma]} K(G,\sigma')
\end{align*}
Of course, $\Pi(G,[\sigma])$ is a $\ZZ$-basis for $K(G,[\sigma])$. Thus, we can write
$$M(\varphi,\tau) = \sum_{(\varphi',\tau') \in \Phi^c(G^L)} m_r((\varphi,\tau),(\varphi',\tau')) \pi(\varphi',\tau'), \qquad (\varphi,\tau) \in \Phi^c(G^L)$$
Here, $m_r = \{m_r((\varphi,\tau),(\varphi',\tau')) \mid (\varphi,\tau),(\varphi',\tau') \in \Phi^c(G^L)\}$ is an (infinite) square matrix with nonnegative integer entries. In fact, if we choose an appropriate ordering on $\Phi^c(G^L)$, this matrix is upper triangular with ones along the diagonal (see \cite[Proposition 6.6.7]{Vogan1981}). Consequently, $m_r$ is invertible and $M(G,[\sigma])$ forms a second $\ZZ$-basis for $K(G,[\sigma])$. Crucially, the distribution characters of the standard representations $M(G,[\sigma])$ are relatively easy to compute. Thus, Question \ref{question1} becomes.

\begin{question}
Can we compute the (infinite, square) change of basis matrix $m_r$?
\end{question}

A powerful idea, implicit in the Kazhdan-Lusztig conjectures, is that the change-of-basis matrix $m_r$ should be related to the singularities of the $G^{\vee}$-orbit closures in the space $P(G^L)$ of Langlands parameters. Unfortunately, we find that all $G^{\vee}$-orbits in $P(G^L)$ are closed (this is because the image in $G^L$ of a Langlands parameter $\varphi: W_{\RR} \to G^L$ consists of \emph{semisimple} elements). So the $G^{\vee}$-orbit closures in $P(G^L)$ are non-singular. The solution, proposed in \cite{AdamsBarbaschVogan}, is to modify the definition of a Langlands parameter to get a more interesting geometry.

Given a semisimple element $\lambda \in \fg^{\vee}$, define the reductive subalgebra
$$\fm(\lambda) := Z_{\fg^{\vee}}(\exp(2\pi i\lambda)) \qquad \text{(subalgebra corresponding to integral roots)}.$$
There is a $\ZZ$-grading on $\fm(\lambda)$
$$\fm(\lambda) = \bigoplus_{n \in \ZZ}\fm(\lambda)_n, \qquad \fm(\lambda)_n := \{X \in \fm(\lambda) \mid [\lambda,X]=nX\}.$$
We get a parabolic subalgebra
$$\fp(\lambda) = \fl(\lambda) \oplus \fu(\lambda), \qquad \fl(\lambda) = \fm(\lambda)_0, \qquad \fu(\lambda) = \bigoplus_{n >0} \fm(\lambda)_n.$$
The \emph{canonical flat} through $\lambda$ is the affine subspace of $\fm(\lambda)$
$$F(\lambda) := \lambda + \fu(\lambda) = P(\lambda)\lambda \subset \fm(\lambda).$$
Since $\exp(2\pi i \lambda)$ is central in $M^{\vee}$, the function $\exp(2\pi i \bullet)$ is constant on $F(\lambda)$.

\begin{definition}[Definitions 6.9, 7.6, \cite{AdamsBarbaschVogan}]\label{def:geometricparam}
A \emph{geometric parameter} is a pair $(y,F)$ such that
\begin{itemize}
    \item[(i)] $y$ is a semisimple element in $G^L \setminus G^{\vee}$.
    \item[(ii)] $F$ is a canonical flat in $\fg^{\vee}$.
    \item[(iii)] $y^2 = \exp(2\pi i F)$.
\end{itemize}
Write $X(G^L)$ for the set of geometric parameters and $\Xi(G^L)$ for the set of $G^{\vee}$-conjugacy classes in $X(G^L)$. A \emph{complete geometric parameter} is a pair $(\xi,\tau)$ consisting of a geometric parameter $\xi \in X(G^L)$ and an irreducible representation of the component group of $Z_{G^{\vee}}(\xi)$. Write $X^c(G^L)$ for the set of complete geometric parameters and $\Xi^c(G^L)$ for the set of $G^{\vee}$-conjugacy classes in $X^c(G^L)$.
\end{definition}

\begin{prop}[Proposition 6.17, \cite{AdamsBarbaschVogan}]\label{prop:geomofX}
The natural map
$$P(G^L) \twoheadrightarrow X(G^L), \qquad (y,\lambda) \mapsto (y,F(\lambda))$$
induces a bijection
\begin{equation}\label{eq:bij1}\Phi(G^L) \xrightarrow{\sim} \Xi(G^L)\end{equation}
If $(y,\lambda) \in P(G^L)$, then $Z_{G^{\vee}}(y,\lambda)$ is a maximal reductive subgroup of $Z_{G^{\vee}}(y,F(\lambda))$. In particular, there is a canonical isomorphism
$$Z_{G^{\vee}}(y,\lambda)/Z_{G^{\vee}}(y,\lambda)^{\circ} \xrightarrow{\sim} Z_{G^{\vee}}(y,F(\lambda))/Z_{G^{\vee}}(y,F(\lambda))^{\circ} $$
So (\ref{eq:bij1}) lifts to a bijection
\begin{equation}\label{eq:bij2}\Phi^c(G^L) \xrightarrow{\sim} \Xi^c(G^L)\end{equation}
Composing the bijection of Theorem \ref{thm:LLCclassicalcomplete} with (\ref{eq:bij2}), we get a bijection
$$\pi: \Xi^c(G^L) \xrightarrow{\sim} \Pi(G,[\sigma])$$
\end{prop}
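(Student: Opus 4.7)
First I will verify that the map $(y,\lambda) \mapsto (y, F(\lambda))$ is well-defined and $G^\vee$-equivariant as a map $P(G^L) \to X(G^L)$. Well-definedness follows from the observation made just before Definition \ref{def:geometricparam}: $\exp(2\pi i \cdot)$ is constant on $F(\lambda)$, so the condition $y^2 = \exp(2\pi i \lambda)$ upgrades to $y^2 = \exp(2\pi i F(\lambda))$. Equivariance is clear because $\fm(\lambda)$, $\fp(\lambda)$, $\fu(\lambda)$ and $F(\lambda)$ are built naturally from $\lambda$. Hence we get an induced map $\Phi(G^L) \to \Xi(G^L)$.

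To construct an inverse, the plan is to show that each pair $(y, F) \in X(G^L)$ contains a Langlands parameter --- that is, some $\lambda \in F$ with $[\lambda, \Ad(y)\lambda] = 0$ --- and that any two such $\lambda$ inside $F$ lie in a single $Z_{G^\vee}(y, F)$-orbit. For existence, fix any $\lambda_0 \in F$. Since $y^2 = \exp(2\pi i \lambda_0)$ is central in $M := M(\lambda_0)$, the element $y$ normalizes $M$ and $\Ad(y)$ acts on $\fm := \fm(\lambda_0)$ as an involution. I would then produce $\lambda \in F$ by correcting $\lambda_0$ by an element of $\fu(\lambda_0)$ so as to land in a $y$-stable Cartan of $\fm$ (where the commutation condition is automatic), using the invertibility of $\ad(\lambda_0)$ on $\fu(\lambda_0)$ to solve the relevant equation iteratively. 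This is the main obstacle, because $\Ad(y)$ need not preserve $F$ or the $\ZZ$-grading on $\fm$ coming from $\lambda_0$, so the correction must be chosen with care. For uniqueness up to $Z_{G^\vee}(y, F)$-conjugacy: if $(y, \lambda)$ and $(y, \lambda')$ are both Langlands parameters inside the same flat $F$, the orbit-stabilizer principle applied to the action of $Z_{G^\vee}(y, F)$ on the set of Langlands parameters in $F$ supplies the desired conjugating element.

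The same orbit-stabilizer picture yields the centralizer assertion. The set of Langlands parameters inside $F$ forms a single $Z_{G^\vee}(y, F)$-orbit with isotropy $Z_{G^\vee}(y, \lambda)$, so the quotient $Z_{G^\vee}(y, F)/Z_{G^\vee}(y, \lambda)$ is identified with the $\Ad(y)$-fixed part of the unipotent group $U(\lambda)$, which is connected. This shows simultaneously that $Z_{G^\vee}(y, \lambda)$ is a maximal reductive subgroup of $Z_{G^\vee}(y, F)$ and that the inclusion induces an isomorphism of component groups. The lift of (\ref{eq:bij1}) to a bijection (\ref{eq:bij2}) on complete parameters is then immediate, and composition with Theorem \ref{thm:LLCclassicalcomplete} yields the final bijection $\pi: \Xi^c(G^L) \xrightarrow{\sim} \Pi(G, [\sigma])$.
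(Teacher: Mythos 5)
The paper does not actually prove this proposition---it is cited verbatim from \cite{AdamsBarbaschVogan} (Proposition 6.17)---so I am evaluating your sketch on its own terms.

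Your plan is the right shape: to invert $\Phi(G^L) \to \Xi(G^L)$ you need to show (a) every flat $F$ with $y^2 = \exp(2\pi i F)$ contains some $\lambda$ with $[\lambda, \Ad(y)\lambda] = 0$, (b) any two such $\lambda$ are conjugate under $Z_{G^\vee}(y,F)$, and (c) the centralizer inclusion $Z_{G^\vee}(y,\lambda) \subset Z_{G^\vee}(y,F)$ has connected unipotent cokernel. You also correctly flag the central difficulty: $\Ad(y)$ need not preserve $F$, nor the $\ZZ$-grading on $\fm(\lambda)$, so one cannot naively average or project. Unfortunately, the proposal never overcomes that difficulty; it is named but not resolved.

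Two specific gaps. First, the uniqueness step: you write that ``the orbit-stabilizer principle applied to the action of $Z_{G^\vee}(y,F)$ on the set of Langlands parameters in $F$ supplies the desired conjugating element.'' This is circular. Orbit-stabilizer describes the coset space of a \emph{single} orbit once transitivity is known; it does not produce a conjugating element or prove transitivity. The transitivity of $Z_{G^\vee}(y,F)$ on $\{\lambda' \in F : [\lambda', \Ad(y)\lambda'] = 0\}$ is exactly what needs an argument, and none is given. Second, the centralizer step identifies $Z_{G^\vee}(y,F)/Z_{G^\vee}(y,\lambda)$ with ``the $\Ad(y)$-fixed part of the unipotent group $U(\lambda)$.'' But since, as you yourself observed, $\Ad(y)$ need not preserve the grading, it need not preserve $U(\lambda)$ either, so $U(\lambda)^{\Ad(y)}$ is not a priori a group, and the claimed identification does not follow from anything established. (The correct route is more subtle: one must first show $Z_{G^\vee}(y,F) = Z_{G^\vee}(y) \cap P(\lambda)$, then exhibit a Levi-type decomposition of this group whose unipotent radical is connected; neither point is free.)

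The argument in \cite{AdamsBarbaschVogan} takes a genuinely different tack from your pointwise correction scheme: it first establishes the fiber-bundle description $X(\OO, G^L)_i \simeq G^\vee \times_{K_i(\lambda)} M(\lambda)/P(\lambda)$ (recorded in this paper as the proposition immediately following the one in question), reducing the orbit and isotropy analysis on $X(\OO,G^L)$ to the well-understood situation of a symmetric subgroup $K_i(\lambda)$ acting on the flag variety $M(\lambda)/P(\lambda)$. Existence, uniqueness, and the structure of the isotropy groups then all fall out of that geometric identification rather than from an iterative deformation of $\lambda_0$ inside $F$. Your approach is not obviously wrong in spirit, but as written it leaves the two hardest steps unproved.
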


In contrast to $P(G^L)$, the set $X(G^L)$ of geometric parameters has an interesting geometry, which we will now describe. There is a decomposition
$$X(G^L) = \bigsqcup_{\OO \subset \fg^{\vee}} X(\OO,G^L), \qquad X(\OO,G^L) = \{(y,F) \in X(G^L) \mid F \subset \OO\}$$
where the disjoint union runs over all semisimple $G^{\vee}$-orbits $\OO \subset \fg^{\vee}$. The pieces $X(\OO,G^L)$ can be built out of the following spaces
\begin{align*}
    F(\OO) &= \{F(\lambda) \mid \lambda \in \OO\}\\
    e(\OO) &= \{\exp(2\pi i \lambda) \mid \lambda \in \OO\}\\
    \sqrt{e(\OO)} &= \{y \in G^L \setminus G^{\vee} \mid y^2 \in e(\OO)\}
\end{align*}
Note that $G^{\vee}$ acts by conjugation on all three spaces above.

The following proposition tells us all that we need to know about the geometry of $X(\OO,G^L)$. The main conclusion is that the $G^{\vee}$-orbit structure on $X(\OO,G^L)$ is completely determined by the orbit structure of a symmetric subgroup acting on a (partial) flag variety. 

\begin{prop}[Proposition 6.16, \cite{AdamsBarbaschVogan}]
The spaces $F(\OO)$, $e(\OO)$, and $\sqrt{e(\OO)}$ are algebraic varieties with $G^{\vee}$-action. The spaces $F(\OO)$ and $e(\OO)$ are homogeneous for $G^{\vee}$. If we fix $\lambda \in \OO$, there are $G^{\vee}$-equivariant isomorphisms
\begin{itemize}
    \item[(i)] $G^{\vee}/P(\lambda) \xrightarrow{\sim} F(\OO)$.
    \item[(ii)] $G^{\vee}/M(\lambda) \xrightarrow{\sim} e(\OO)$.
\end{itemize}
The space $\sqrt{e(\OO)}$ has finitely many $G^{\vee}$-orbits, denoted $\sqrt{e(\OO)}_i$. For each $i$, choose $y_i \in \sqrt{e(\OO)}_i$ with $y_i^2 = e(\lambda)$, and let $K_i(\lambda) = G^{\Ad(y_i)}$, a symmetric subgroup of $M(\lambda)$. There are $G^{\vee}$-equivariant isomorphisms
\begin{itemize}
    \item[(iii)] $G^{\vee}/K_i(\lambda) \xrightarrow{\sim} \sqrt{e(\OO)}_i$.
\end{itemize}
The maps
$$e: F(\OO) \to e(\OO), \qquad (\bullet)^2: \sqrt{e(\OO)} \to e(\OO)$$
are algebraic and $G^{\vee}$-equivariant. Under the identifications of (i), (ii), and (iii), they correspond to the inclusions
$$P(\lambda) \subset M(\lambda), \qquad K_i(\lambda) \subset M(\lambda)$$
respectively. So there is a $G^{\vee}$-equivariant isomorphism
$$X(\OO,G^L) \simeq \bigsqcup_i X(\OO,G^L)_i, \qquad X(\OO,G^L)_i =  G^{\vee}/P(\lambda) \times_{G^{\vee}/M(\lambda)}  G^{\vee}/K_i(\lambda)$$
There are further isomorphisms of $G^{\vee}$-equivariant fiber bundles
$$G^{\vee}/P(\lambda) \times_{G^{\vee}/M(\lambda)}  G^{\vee}/K_i(\lambda) \simeq G^{\vee} \times_{K_i(\lambda)} M(\lambda)/P(\lambda)$$
In particular, there is a natural bijection
$$\{G^{\vee}\text{-orbits on } X(\OO,G^L)_i\} \xrightarrow{\sim} \{K_i(\lambda)\text{-orbits on } M(\lambda)/P(\lambda)\}$$
This bijection preserves closure relations and singularities.
\end{prop}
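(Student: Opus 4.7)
The plan is to establish (i), (ii), and (iii) one at a time by identifying stabilizers, then deduce the fiber product and fiber bundle descriptions formally, and finally derive the orbit correspondence from the induced-space structure.

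Since $\OO$ is a single $G^{\vee}$-orbit and the maps $F(\bullet)$ and $e(\bullet)$ are $G^{\vee}$-equivariant, $F(\OO)$ and $e(\OO)$ are $G^{\vee}$-homogeneous; it remains to compute a stabilizer at the basepoint $\lambda$. For (ii), the stabilizer of $\exp(2\pi i \lambda)$ is $Z_{G^{\vee}}(\exp(2\pi i \lambda)) = M(\lambda)$ by definition. For (i), I would show that the stabilizer of $F(\lambda) = \lambda + \fu(\lambda)$ equals $P(\lambda)$: the parabolic $P(\lambda)$ preserves $F(\lambda) = P(\lambda) \cdot \lambda$ because its Levi fixes $\lambda$ and its unipotent radical sweeps out $\lambda + \fu(\lambda)$, while conversely any element preserving $F(\lambda)$ must preserve both $\exp(2\pi i \lambda)$ and the tangent subspace $\fu(\lambda)$, forcing it into the normalizer of $\fp(\lambda)$ inside $M(\lambda)$, which is $P(\lambda)$.

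The analysis of $\sqrt{e(\OO)}$ is the substantive step. Since $(\bullet)^2$ is $G^{\vee}$-equivariant with homogeneous image $e(\OO)$, every $G^{\vee}$-orbit in $\sqrt{e(\OO)}$ meets the fiber $\{y \in G^L \setminus G^{\vee} : y^2 = \exp(2\pi i \lambda)\}$ in a single $M(\lambda)$-orbit, and the finiteness of such orbits is a classical result in the classification of strong real forms of reductive groups (each $y$ gives a lift of the Galois element acting by involution on the pair $(M(\lambda), \exp(2\pi i \lambda))$, and conjugacy classes of such lifts are finite). The stabilizer in $G^{\vee}$ of a chosen representative $y_i$ is $\{g : g y_i = y_i g\} = (G^{\vee})^{\Ad(y_i)} =: K_i(\lambda)$, and since any element commuting with $y_i$ also commutes with $y_i^2 = \exp(2\pi i \lambda)$, this stabilizer lies in $M(\lambda)$, yielding (iii). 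The $G^{\vee}$-equivariance automatically identifies the maps $e$ and $(\bullet)^2$ with the natural projections $G^{\vee}/P(\lambda) \to G^{\vee}/M(\lambda)$ and $G^{\vee}/K_i(\lambda) \to G^{\vee}/M(\lambda)$ induced by the inclusions $P(\lambda), K_i(\lambda) \subset M(\lambda)$.

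The remainder is formal. By definition $X(\OO,G^L) = F(\OO) \times_{e(\OO)} \sqrt{e(\OO)}$, and decomposing $\sqrt{e(\OO)} = \bigsqcup_i \sqrt{e(\OO)}_i$ yields $\bigsqcup_i X(\OO,G^L)_i$. The identification $G^{\vee}/P(\lambda) \times_{G^{\vee}/M(\lambda)} G^{\vee}/K_i(\lambda) \simeq G^{\vee} \times_{K_i(\lambda)} M(\lambda)/P(\lambda)$ is a standard homogeneous-space manipulation, realized by $[g, mP(\lambda)] \mapsto (gmP(\lambda), gK_i(\lambda))$ with inverse sending a fiber-product point $(g_1 P(\lambda), g_2 K_i(\lambda))$, written as $g_1 = g_2 m$ with $m \in M(\lambda)$, to $[g_2, mP(\lambda)]$. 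Finally, $G^{\vee}$-orbits on any induced space $G^{\vee} \times_{K_i(\lambda)} Y$ correspond bijectively to $K_i(\lambda)$-orbits on $Y$ via $K_i(\lambda) \cdot y \leftrightarrow G^{\vee} \cdot [1,y]$; because the projection $X(\OO,G^L)_i \to G^{\vee}/K_i(\lambda)$ is a Zariski-locally trivial bundle with fiber $M(\lambda)/P(\lambda)$, closures of $G^{\vee}$-orbits arise by inducing up closures of $K_i(\lambda)$-orbits, and \'etale-locally the total space is a product, so the singularities transverse to the two orbits coincide. The main obstacle in the whole argument is the finiteness and stabilizer analysis for $\sqrt{e(\OO)}$; once (i)--(iii) are in place, everything else is homogeneous-space bookkeeping.
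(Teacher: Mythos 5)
Your argument is correct and follows essentially the route taken in \cite{AdamsBarbaschVogan}: compute stabilizers of $F(\lambda)$, $\exp(2\pi i\lambda)$, and $y_i$ directly to get (i)--(iii), read off the fiber-product decomposition from the definition of $X(\OO,G^L)$, pass to the induced-space model, and deduce the orbit correspondence formally. One small caveat: the projection $X(\OO,G^L)_i \to G^{\vee}/K_i(\lambda)$ need not be Zariski-locally trivial in general (this depends on $K_i(\lambda)$ being special in Serre's sense), but your fallback to \'etale-local triviality is exactly what is needed and suffices both for matching closure relations and for comparing singularities, so the conclusion stands.
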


With $\OO \subset \fg^{\vee}$ fixed, define $\Xi(\OO,G^L)$, $X^c(\OO,G^L)$, $\Xi^c(\OO,G^L)$ by analogy with Definition \ref{def:geometricparam}
\begin{align*}
\Xi(\OO,G^L) &= \{G^{\vee}\text{-conjugacy classes in } X(\OO,G^L)\\
X^c(\OO,G^L) &= \{(\xi,\tau) \in X^c(G^L) \mid \xi \in X(\OO,G^L)\}\\
\Xi^c(\OO,G^L) &= \{G^{\vee}\text{-conjugacy classes in } X^c(\OO,G^L)\}
\end{align*}
Also define
$$KX(\OO,G^L) = \text{Grothendieck group of } G^{\vee}\text{-equivariant constructible sheaves on } X(\OO,G^L)$$
A conjugacy class $G^{\vee}(\xi,\tau) \in \Xi^c(\OO,G^L)$ is equivalent to a pair $(S_{\xi},\mathcal{L}_{\tau})$ consisting of a $G^{\vee}$-orbit $S_{\xi} = G^{\vee}\xi$ in $X(\OO,G^L)$ and a $G^{\vee}$-equivariant local system $\mathcal{L}_{\tau}$ on $S_{\xi}$. Given such a pair, we can construct two different classes $\mu(\xi,\tau)$ and $P(\xi,\tau)$ in $KX(\OO,G^L)$. Let $j: S_{\xi} \hookrightarrow X(\OO,G^L)$ denote the (locally closed) embedding. For $\mu(\xi,\tau)$, we fist regard $\mathcal{L}_{\tau}$ as a simple $G^{\vee}$-equivariant constructible sheaf on $S_{\xi}$. Taking the pushforward with proper support, we get a simple $G^{\vee}$-equivariant constructible sheaf $j_!\mathcal{L}_{\tau}$ on $X(\OO,G^L)$. We define $\mu(\xi,\tau)$ to be the corresponding class in $K$-theory (with a sign determined by the parity $\dim(S_{\xi})$)
$$\mu(\xi,\tau) :=  (-1)^{\dim(S_{\xi})}[j_! \mathcal{L}_{\tau}] \in KX(\OO,G^L)$$
For $P(\xi,\tau)$, we start with the complex $\mathcal{L}_{\tau}[\dim(S_{\xi})]$ consisting of a single sheaf $\mathcal{L}_{\tau}$ in degree $-\dim(S_{\xi})$. This complex is a simple $G^{\vee}$-equivariant perverse sheaf on $S_{\xi}$. Taking the intermediate extension, we get a simple $G^{\vee}$-equivariant perverse sheaf $j_{!*}\mathcal{L}_{\tau}$ on $X(\OO,G^L)$. By definition, $j_{!*}\mathcal{L}_{\tau}$ is a bounded complex of sheaves on $X(\OO,G^L)$ with constructible cohomology. We define $P(\xi,\tau)$ to be the Euler characterstic of this complex, i.e. the alternating sum of its cohomology sheaves
\begin{equation}\label{eq:perverse}P(\xi,\tau) := \sum_i (-1)^i [H^i(j_{!*}\mathcal{L}_{\tau})] \in KX(\OO,G^L).\end{equation}
The sets $\{\mu(\xi,\tau) \mid (\xi,\tau) \in \Xi^c(\OO,G^L)\}$ and $\{P(\xi,\tau) \mid (\xi,\tau) \in \Xi^c(\OO,G^L)\}$ are $\ZZ$-bases for $KX(\OO,G^L)$, see \cite[Theorem 4.3.1]{BeilinsonBernsteinDeligne}. Now let
$$KX(G^L) := \bigoplus_{\OO \subset \fg^{\vee}} KX(G^L,\OO),$$
where the sum runs over all semisimple orbits $\OO \subset \fg^{\vee}$. Of course, the sets $\{\mu(\xi,\tau) \mid (\xi,\tau) \in \Xi^c(G^L)\}$ and $\{P(\xi,\tau) \mid (\xi,\tau) \in \Xi^c(G^L)\}$ are $\ZZ$-bases for $KX(G^L)$. Write $m_g$ for the change of basis matrix
$$\mu(\xi,\tau) = \sum_{(\xi',\tau') \in \Xi^c(G^L)} m_g((\xi,\tau),(\xi',\tau'))P(\xi',\tau'), \qquad (\xi,\tau) \in \Xi^c(G^L)$$
Thanks to Proposition \ref{prop:geomofX}, the problem of computing $m_g$ can be reduced to an analogous problem for a symmetric subgroup acting on a flag variety. This analogous problem was solved in \cite{LusztigVogan}. The conclusion is that the matrices $m_r$ and $m_g$ are (essentially) mutually inverse transpose.

\begin{theorem}[Corollary 1.25, \cite{AdamsBarbaschVogan}]\label{thm:duality1}
The matrices $m_r$ and $m_g$ are related by the formula
$$m_r((\xi,\tau),(\xi',\tau')) = (-1)^{\dim(S_{\xi})-\dim(S_{\xi'})} m_g^{-1}((\xi',\tau'),(\xi,\tau))$$
\end{theorem}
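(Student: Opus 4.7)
The plan is to reduce the identity to Lusztig--Vogan duality \cite{LusztigVogan} via the dictionary between the representation-theoretic and sheaf-theoretic sides. First, since every representation in $\Pi_{\varphi}(G,\sigma)$ has infinitesimal character $\lambda(\varphi)$, both $m_r$ and $m_g$ are block-diagonal indexed by the semisimple orbit $\OO = G^{\vee}\lambda \subset \fg^{\vee}$, so it suffices to prove the identity on each $\Xi^c(\OO,G^L)$ separately. Fix such an $\OO$ and pick $\lambda \in \OO$. By the preceding proposition, $G^{\vee}$-orbits on $X(\OO,G^L)_i$ together with their equivariant local systems correspond, with closure relations and singularities preserved, to $K_i(\lambda)$-orbits and equivariant local systems on the partial flag variety $M(\lambda)/P(\lambda)$. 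Both sides of the identity are thereby transported onto the setting of a symmetric subgroup acting on a flag variety, the context of \cite{LusztigVogan}.

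On the representation side, Beilinson--Bernstein localization together with translation to infinitesimal character $\lambda$ (as formulated in \cite{Vogan1981}) identifies finite-length Harish-Chandra modules with infinitesimal character $\lambda$, for all real forms in $[\sigma]$ simultaneously, with $K_i(\lambda)$-equivariant twisted $\mathcal{D}$-modules on $M(\lambda)/P(\lambda)$, summed over $i$. Under this identification the standard representation $M(\xi,\tau)$ corresponds to a standard $\mathcal{D}$-module on the orbit $S_{\xi}$ whose Riemann--Hilbert image (up to a shift by $\dim S_{\xi}$) is $j_!\mathcal{L}_{\tau}$, and the irreducible $\pi(\xi,\tau)$ corresponds to the simple $\mathcal{D}$-module whose Riemann--Hilbert image is $j_{!*}\mathcal{L}_{\tau}$. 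Thus the entries of $m_r$ compute the multiplicities of simple perverse sheaves $j_{!*}\mathcal{L}_{\tau'}[\dim S_{\xi'}]$ inside the perverse cohomology of the costandard complexes $j_! \mathcal{L}_{\tau}[\dim S_{\xi}]$ on $X(\OO,G^L)$.

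On the geometric side, $m_g$ expresses the classes $\mu(\xi,\tau) = (-1)^{\dim S_{\xi}}[j_! \mathcal{L}_{\tau}]$ in the basis $P(\xi',\tau')$ of Euler characteristics of IC complexes. Inverting this amounts to expressing IC Euler characteristics in the costandard basis, which via the standard distinguished triangle relating $j_! \mathcal{L}$, $j_{!*}\mathcal{L}$, and its cokernel, reads off the stalks of IC sheaves on $G^{\vee}$-orbit closures. Lusztig--Vogan duality, which for a symmetric subgroup acting on a (partial) flag variety states that the matrix of IC-stalks is, up to the explicit parity sign, the inverse transpose of the matrix of composition multiplicities of standard Harish-Chandra modules, then yields the stated identity $m_r((\xi,\tau),(\xi',\tau')) = (-1)^{\dim S_{\xi}-\dim S_{\xi'}}\, m_g^{-1}((\xi',\tau'),(\xi,\tau))$.

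The main obstacle is bookkeeping rather than conceptual. One must (a) check that the component-group labels $\tau$ in the bijection $\Phi^c(G^L) \xrightarrow{\sim} \Xi^c(G^L)$ of Proposition \ref{prop:geomofX} match the equivariance data used by the localization dictionary, so the two duality statements pair up correctly across all real forms in $[\sigma]$ at once; and (b) track three separate dimension shifts---the sign $(-1)^{\dim S_{\xi}}$ built into $\mu(\xi,\tau)$, the perverse-degree shift built into the definition \eqref{eq:perverse} of $P(\xi,\tau)$, and the Riemann--Hilbert shift converting standard $\mathcal{D}$-modules to $j_! \mathcal{L}_{\tau}$---so that they combine to exactly $(-1)^{\dim S_{\xi}-\dim S_{\xi'}}$ in the final formula.
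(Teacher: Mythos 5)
Your high-level plan agrees with what the paper indicates: reduce to a fixed semisimple orbit $\OO$, use Proposition \ref{prop:geomofX} to transport $m_g$ to the setting of a symmetric subgroup $K_i(\lambda)$ acting on the partial flag variety $M(\lambda)/P(\lambda)$, apply the Lusztig--Vogan result, and track the dimension shifts. That is indeed the skeleton of the argument behind Corollary 1.25 of \cite{AdamsBarbaschVogan}.

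However, the middle step of your proposal contains a genuine error of attribution that elides the actual content of the theorem. You claim that Beilinson--Bernstein localization (plus translation) identifies finite-length Harish-Chandra modules for the real forms in $[\sigma]$ with $K_i(\lambda)$-equivariant twisted $\mathcal{D}$-modules on $M(\lambda)/P(\lambda)$. But $K_i(\lambda)$ and $M(\lambda)/P(\lambda)$ live on the \emph{dual} side: $K_i(\lambda)$ is a symmetric subgroup of $M(\lambda) \subset G^{\vee}$, and the variety $M(\lambda)/P(\lambda)$ sits inside the geometric parameter space $X(\OO, G^L)$, which is built out of $G^{\vee}$. Beilinson--Bernstein localization of Harish-Chandra modules for $G^{\sigma}$ lands in $K$-equivariant $\mathcal{D}$-modules on the flag variety of $G$ itself, not of $G^{\vee}$. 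There is no direct localization taking representations of real forms of $G$ to sheaves on the $G^{\vee}$-side. The passage from the $G$-side orbit geometry to the $G^{\vee}$-side orbit geometry is precisely Vogan's character multiplicity duality (Irreducible Characters IV and its elaboration in \cite{AdamsBarbaschVogan}), a nontrivial duality theorem that matches blocks of Harish-Chandra modules across Langlands duality and reverses closure order. That duality, not localization, is what makes $m_r$ and $m_g$ inverse transposes of one another; Lusztig--Vogan is then used to identify the Kazhdan--Lusztig--Vogan polynomials appearing on the Hecke-module side with the IC stalks appearing on the constructible-sheaf side. As written, your proof collapses the duality into a localization statement that is not true, and so it omits the conceptual heart of the theorem. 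Everything else (block-diagonality over $\OO$, the role of Proposition \ref{prop:geomofX}, the sign bookkeeping) is sound.
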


It is conceptually illuminating to reformulate Theorem \ref{thm:duality1} as a duality on the level of Grothendieck groups.

\begin{cor}[Theorem 1.24, \cite{AdamsBarbaschVogan}]
There is a perfect pairing
$$\langle \ , \ \rangle: K\Pi(G,[\sigma]) \times KX(G^L) \to \ZZ$$
such that
\begin{itemize}
    \item[(i)] $\langle M(\xi,\tau),\mu(\xi',\tau')\rangle = e(\xi,\tau) \delta_{(\xi,\tau),(\xi',\tau')}$.
    \item[(ii)] $\langle \pi(\xi,\tau), P(\xi',\tau')\rangle = e(\xi,\tau) (-1)^{\dim(S_{\xi})}\delta_{(\xi,\tau),(\xi',\tau')}$
\end{itemize}
where $e(\xi,\tau) \in \{\pm 1\}$ is the \emph{Kottwitz sign} (cf. \cite[Definition 15.8]{AdamsBarbaschVogan}) and $\delta$ is the Kronecker delta.
\end{cor}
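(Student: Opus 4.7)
The plan is to take property (i) as the \emph{definition} of the pairing and then derive (ii) from Theorem \ref{thm:duality1} by a change-of-basis calculation. Since $\{M(\xi,\tau)\}_{(\xi,\tau)\in\Xi^c(G^L)}$ is a $\ZZ$-basis of $K\Pi(G,[\sigma])$ and $\{\mu(\xi,\tau)\}_{(\xi,\tau)\in\Xi^c(G^L)}$ is a $\ZZ$-basis of $KX(G^L)$, setting
$$\langle M(\xi,\tau),\mu(\xi',\tau')\rangle := e(\xi,\tau)\,\delta_{(\xi,\tau),(\xi',\tau')}$$
and extending $\ZZ$-bilinearly defines a pairing whose Gram matrix in these bases is diagonal with entries in $\{\pm 1\}$; it is thus automatically perfect, and property (i) is built in.

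To verify (ii), I would expand $\pi(\xi,\tau)$ and $P(\xi',\tau')$ in the $M$- and $\mu$-bases using $m_r^{-1}$ and $m_g^{-1}$ respectively. Bilinearity together with (i) gives
$$\langle \pi(\xi,\tau),P(\xi',\tau')\rangle = \sum_{(a,\alpha)\in\Xi^c(G^L)} m_r^{-1}((\xi,\tau),(a,\alpha))\,m_g^{-1}((\xi',\tau'),(a,\alpha))\,e(a,\alpha).$$
Theorem \ref{thm:duality1}, after inversion and transposition, is equivalent to
$$m_r^{-1}((\xi,\tau),(a,\alpha)) = (-1)^{\dim S_\xi + \dim S_a}\,m_g((a,\alpha),(\xi,\tau)),$$
so the above sum becomes $(-1)^{\dim S_\xi}$ times the $((\xi',\tau'),(\xi,\tau))$-entry of $m_g^{-1}\,D\,m_g$, where $D$ is the diagonal matrix with entries $(-1)^{\dim S_a}\,e(a,\alpha)$.

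The crux of the argument is to show that $m_g^{-1} D\, m_g = E$, where $E$ is the diagonal matrix with entries $e(\xi,\tau)$---equivalently, that $D\,m_g = m_g\,E$. Because $m_g$ is triangular with ones on the diagonal (in the closure order on $G^{\vee}$-orbits in $X(G^L)$), this reduces to the claim that $(-1)^{\dim S_\xi}\,e(\xi,\tau)$ is constant on each ``block'' of $\Xi^c(G^L)$ on which $m_g$ can mix indices. This is the main obstacle, and its verification is where the Kottwitz sign \cite[Definition 15.8]{AdamsBarbaschVogan} enters substantively: $e(\xi,\tau)$ depends only on the inner form carrying the corresponding representation, while within a block the parity of $\dim S_\xi$ varies precisely to compensate for changes of inner form. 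Granting this sign compatibility, the diagonal twist commutes with $m_g$, the double product collapses to a Kronecker delta, and (ii) emerges with the claimed sign $e(\xi,\tau)\,(-1)^{\dim S_\xi}$.
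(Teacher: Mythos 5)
Your derivation of (ii) from (i) and Theorem \ref{thm:duality1} breaks at the crux step, and the break is not a gap to fill but an impossibility. You correctly reduce (ii) to the matrix identity $m_g^{-1}Dm_g = E$, where $D$ has diagonal entries $(-1)^{\dim S_a}e(a,\alpha)$ and $E$ has diagonal entries $e(a,\alpha)$. But since $m_g$ is triangular with ones on the diagonal (as you note), the diagonal of $m_g^{-1}Dm_g$ is exactly the diagonal of $D$. Equating it with the diagonal of $E$ forces $(-1)^{\dim S_a}=1$ for every complete geometric parameter, i.e.\ every $G^{\vee}$-orbit in $X(\OO,G^L)$ would have even complex dimension. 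This already fails in Example \ref{ex:PGL3}: the open orbit $U$ has dimension $1$. The heuristic you offer --- that the parity of $\dim S_\xi$ varies ``precisely to compensate for changes of inner form'' --- is also not available there: $\pi(N)$, $\pi(S)$, and $\pi(U,\mathcal{L}^U_{\mathrm{triv}})$ are all representations of the \emph{same} real form $G_s$, while $\dim S$ jumps from $0$ to $1$ across that block, so $(-1)^{\dim S_\xi}\,e(\xi,\tau)$ cannot be constant on a block on which $m_g$ mixes indices.

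The underlying problem is one of logical direction. In \cite{AdamsBarbaschVogan}, the pairing (Theorem 1.24 there) is established first --- via Lusztig--Vogan duality, Beilinson--Bernstein localization, and the theory of characteristic cycles --- and the matrix relation (Corollary 1.25 there, i.e.\ Theorem \ref{thm:duality1} here) is deduced \emph{from} it. Running the deduction in reverse is underdetermined: the relation between $m_r$ and $m_g$ by itself does not pin down how the Kottwitz sign depends on the parameter, and that dependence is precisely the content of (i) and (ii). The paper itself does not prove this corollary but cites the ABV result; any proof must supply that geometric/representation-theoretic input rather than extract it formally from Theorem \ref{thm:duality1}.
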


\begin{example}\label{ex:PGL3}
Let $G=\mathrm{PGL}(2,\CC)$ and let $[\sigma]$ be the unique inner class. We will use the notation of Examples \ref{ex:PGL1} and \ref{ex:PGL2}. Let 
$$\lambda = \rho = \begin{pmatrix} \frac{1}{2} & 0\\0 & -\frac{1}{2} \end{pmatrix}, \qquad \OO = G^{\vee}\lambda$$
We will describe the space $X(G^L,\OO)$ using Proposition \ref{prop:geomofX}. First note that $M(\lambda)=G^{\vee}$ (since $\lambda$ is integral) and $e(\OO) = \{-\mathrm{Id}\}$. Since $\lambda$ is regular, $P(\lambda) = B^{\vee}$, the subgroup of $G^{\vee}$ consisting of upper triangular matrices, and $F(\OO) \simeq G^{\vee}/B^{\vee}$. Finally
$$\sqrt{e(\OO)} = \{ y \in G^L \setminus G^{\vee} \mid y^2 = -\mathrm{Id}\} = \Ad(G^{\vee}) (\begin{pmatrix}i & 0\\0 & -i \end{pmatrix}, \delta) \simeq G^{\vee}/T^{\vee},$$
where $T^{\vee} = K_1(\lambda)$ is the diagonal torus, the fixed points of $\Ad\begin{pmatrix}i & 0\\0 & -i\end{pmatrix}$. So by Proposition \ref{prop:geomofX}
$$X(\OO,G^L) = G^{\vee}/B^{\vee} \times G^{\vee}/T^{\vee} \simeq G^{\vee} \times_{T^{\vee}} G^{\vee}/B^{\vee}$$
and the $G^{\vee}$-orbit structure on $X(\OO,G^L)$ mirrors the $T^{\vee}$-orbit structure on $G^{\vee}/B^{\vee}$. Accordingly, we can identify the Grothendieck group $KX(G^L,\OO)$ with the Grothendieck group of $T^{\vee}$-equivariant constructible sheaves on $G^{\vee}/B^{\vee}$ (and $m_g$ is preserved under this identification). 

Note that $G^{\vee}/B^{\vee} \simeq \mathbb{P}^1(\CC)$ and $T^{\vee}$ acts therein with three orbits: the north pole $\{N\}$, the south pole $\{S\}$, and open complement $U = \mathbb{P}^1(\CC) \setminus \{N,S\} \simeq T^{\vee}/\{\pm \mathrm{Id}\}$. Now we can describe the set $\Xi^c(\OO,G^L)$ of complete geometric parameters $(S, \mathcal{L})$:
$$\Xi^c(\OO,G^L) = \{(\{N\},\mathcal{L}^N_{\mathrm{triv}}), (\{S\},\mathcal{L}^S_{\mathrm{triv}}), (U,\mathcal{L}^U_{\mathrm{triv}}), (U,\mathcal{L}^U_{\mathrm{sgn}})\}$$
Here $\mathcal{L}^U_{\mathrm{triv}}$ denotes the $T^{\vee}$-equivariant local system on $U$ corresponding to the trivial character of $\{\pm \mathrm{Id}\}$ and so on. Each parameter $(S,\mathcal{L})$ corresponds to a standard $M(S,\mathcal{L})$ and irreducible $\pi(S,\mathcal{L})$ representation of a real form of $G$ as well as a constructible $\mu(S,\mathcal{L})$ and perverse $P(S,\mathcal{L})$ sheaf on $X(\OO,G^L)$. We will record these correspondences in the table below.
\begin{center}
\begin{tabular}{|c|c|c|c|c|} \hline
   $(S,\mathcal{L})$  & $\pi(S,\mathcal{L})$ & $M(S,\mathcal{L})$ & $\mu(S,\mathcal{L})$ & $P(S,\mathcal{L})$\\ \hline
   $(N,\mathcal{L}^N_{\mathrm{triv}})$ & trivial rep of $G_s$ & spherical principal series of $G_s$ & $\delta_N$ & $\delta_N$ \\ \hline
   $(S,\mathcal{L}^S_{\mathrm{triv}})$ & sign rep of $G_s$ & non-spherical principal series of $G_s$ &  $\delta_S$ & $\delta_S$\\ \hline
   
   $(U,\mathcal{L}^U_{\mathrm{triv}})$ & discrete series of $G_s$ & discrete series of $G_s$ &  $j_!\mathcal{L}^U_{\mathrm{triv}}[1]$ & $\mathcal{L}_{\mathrm{triv}}^{\mathbb{P}^1}[1]$
    \\ \hline
   $(U,\mathcal{L}^U_{\mathrm{sgn}})$ & triv of $G_c$ & triv of $G_c$ & $j_!\mathcal{L}^U_{\mathrm{sgn}}[1]$ & $j_!\mathcal{L}^U_{\mathrm{sgn}}[1]$ \\ \hline
\end{tabular}
\end{center}
Here $\delta_N$, $\delta_S$ are the skyscraper sheaves at $N$, $S$ and $\mathcal{L}^{\mathbb{P}^1}_{\mathrm{triv}}$ is the constant sheaf on $\mathbb{P}^1(\CC)$.

The matrix $m_r$ is
$$m_r = \begin{pmatrix}
1 & 0 & 1 & 0\\
0 & 1 & 1 & 0\\
0 & 0 & 1 & 0\\
0 & 0 & 0 & 1
\end{pmatrix} 
$$
(here we use the following easy facts: the spherical (resp. non-spherical) principal series representation of $G_s$ is the sum in $K\Pi(G,\sigma_s)$ of the trivial (resp. sign) representation and the discrete series representation of infinitesimal character $\rho$).

The matrix $m_g$ is: 
$$m_g = \begin{pmatrix}
1 & 0 & 0 & 0\\
0 & 1 & 0 & 0\\
1 & 1 & 1 & 0\\
0 & 0 & 0 & 1
\end{pmatrix} $$
(here we use the following fact, which follows from an easy distinguished triangle: $ j_!\mathcal{L}^U_{\mathrm{triv}}[1] = \mathcal{L}^{\mathbb{P}^1}_{\mathrm{triv}}[1] + \delta_N + \delta_S$ in the Grothendieck group of $T^{\vee}$-equivariant constructible sheaves on $\mathbb{P}^1(\CC)$). It is easy to check that $m_r$ and $m_g$ satisfy the relation of Theorem \ref{thm:duality1}. 
\end{example}

\section{Arthur's conjectures}\label{sec:Arthur}

\begin{definition}[p. 10, \cite{Arthur1983}]
An Arthur parameter for $G^L$ is a continuous homomorphism 
$$\psi: W_{\RR} \times \mathrm{SL}(2,\CC) \to G^L$$
such that
\begin{itemize}
    \item[(i)] $\psi|_{W_{\RR}}$ is a tempered Langlands parameter (cf. Definition \ref{def:Lparameter}).
    \item[(ii)] the restriction of $\psi$ to $\mathrm{SL}(2,\CC)$ is algebraic.
\end{itemize}
Write $Q(G^L)$ for the set of Arthur parameters for $G^L$ and $\Psi(G^L)$ for the set of $G^{\vee}$-conjugacy classes in $Q(G^L)$. An Arthur parameter is \emph{unipotent} if $\psi$ is trivial on $\CC^{\times} \subset W_{\RR}$. Write $Q_{\mathrm{unip}}(G^L)$ for the set of unipotent Arthur parameters and $\Psi_{\mathrm{unip}}(G^L) \subset \Psi(G^L)$ for the set of $G^{\vee}$-conjugacy classes in $Q_{\mathrm{unip}}(G^L)$.
\end{definition}

For each Arthur parameter $\psi \in Q(G^L)$, there is an associated Langlands parameter $\phi_{\psi} \in P(G^L)$ defined by the formula
$$\phi_{\psi}: W_{\RR} \to G^L, \qquad \phi_{\psi}(w) = \psi(w,\begin{pmatrix}|w|^{1/2} & 0\\0 & |w|^{-1/2} \end{pmatrix})$$
This defines a $G^{\vee}$-equivariant map $Q(G^L) \to P(G^L)$. The induced map on conjugacy classes
\begin{equation}\label{eq:injection0}\Psi(G^L) \hookrightarrow \Phi(G^L)\end{equation}
is injective (see \cite[Proposition 1.3.1]{Arthur1983}). Write $\Phi_{\mathsf{Art}}(G^L)$ for its image. If $\psi$ is trivial on $\mathrm{SL}(2,\CC)$, then $\phi_{\psi} = \psi|_{W_{\RR}}$, a tempered Langlands parameter. Hence we have inclusions
$$\Phi_{\mathrm{temp}}(G^L) \subset \Phi_{\mathrm{Art}}(G^L) \subset \Phi(G^L)$$
Composing the map (\ref{eq:injection0}) with the bijection $\Phi(G^L) \xrightarrow{\sim} \Xi(G^L)$ (\ref{eq:bij1}), we get an injection
\begin{equation}\label{eq:injection}
\Psi(G^L) \hookrightarrow \Xi(G^L)\end{equation}
Write $\Xi_{\mathsf{Art}}(G^L) \subset \Xi(G^L)$ for its image. The local version (over $\RR$) of Arthur's conjectures can be stated succinctly as follows.

\begin{conj}[Conjecture 1.3.2, \cite{Arthur1983}]\label{conj:Arthur}
For each Arthur parameter $\psi \in \Psi(G^L)$, there is an associated finite set of irreducible admissible representations of $G^{\sigma}$
$$\Pi_{\psi}^{\mathsf{Art}}(G,\sigma) \subset \Pi(G,\sigma)$$
These sets should satisfy several properties, including
\begin{itemize}
    \item[(i)] $\Pi_{\phi_{\psi}}(G,\sigma) \subset \Pi^{\mathsf{Art}}_{\psi}(G,\sigma)$.
    \item[(ii)] The constituents of $\Pi^{\mathsf{Art}}_{\psi}(G,\sigma)$ are unitary.
\end{itemize}
\end{conj}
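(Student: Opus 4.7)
The plan is to adopt the ABV construction of packets and then establish the two required properties separately. Given an Arthur parameter $\psi$, I pass to the associated Langlands parameter $\phi_{\psi}$ and then, via Proposition \ref{prop:geomofX}, to a $G^{\vee}$-orbit $S_{\xi_{\psi}}$ inside $X(\OO,G^L)$, where $\OO$ is the adjoint orbit of $\lambda(\phi_{\psi})$. The Arthur packet $\Pi_{\psi}^{\mathsf{Art}}(G,\sigma)$ is then defined to consist of those $\pi(\xi',\tau') \in \Pi(G,\sigma)$ such that the pair $(\xi',\tau')$ appears (with nonzero multiplicity) in the characteristic cycle of the intersection cohomology complex $j_{!*}\mathcal{L}_{\mathrm{triv}}$ supported on $\overline{S_{\xi_{\psi}}}$. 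This microlocal construction is intrinsic to the geometry of $X(G^L)$ and uses nothing beyond the bijection $\pi: \Xi^c(G^L) \xrightarrow{\sim} \Pi(G,[\sigma])$.

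Property (i) then falls out of the construction: the open orbit $S_{\xi_{\psi}}$ contributes to its own characteristic cycle with multiplicity one, and the local systems on this orbit correspond — under the bijection of Proposition \ref{prop:geomofX}, restricted to the real form $\sigma$ — precisely to the members of $\Pi_{\phi_{\psi}}(G,\sigma)$. So the $L$-packet of $\phi_{\psi}$ is automatically contained in the Arthur packet of $\psi$.

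Property (ii), unitarity, is the genuine difficulty and is in fact the central open problem in this part of the subject. My strategy would be a two-step reduction. First, reduce to the case of unipotent Arthur parameters (those trivial on $\CC^{\times}$) by real parabolic induction: a general $\psi$ factors through the $L$-group of a proper Levi $\bfL$ where $\psi|_{\CC^{\times}}$ becomes a unitary character twist of a unipotent parameter for $\bfL^L$, and real parabolic induction from unitary data preserves unitarity. Second, for unipotent $\psi$, exploit the Orbit Method apparatus developed later in the paper: the duality $\mathsf{D}$ sends $\psi$ to a nilpotent cover, and the conjectural picture is that the members of $\Pi_{\psi}^{\mathsf{Art}}(G,\sigma)$ are precisely the unitary representations attached to that cover by the theory of unipotent ideals and Harish-Chandra bimodules of \cite{LMBM}. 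The unitarity statement then reduces to exhibiting a positive-definite invariant Hermitian form on each such unipotent representation.

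The hard step is unambiguously this last one. Constructing positive Hermitian forms on unipotent representations uniformly — across all real forms and all nilpotent orbits — has resisted every general approach for nearly forty years. At present the answer is known only in pieces: complex groups via the LMBM theory, quasi-split classical groups via Arthur's endoscopic methods, and scattered families of small or spherical unipotents. Any plausible proof will have to confront this obstacle directly, either by a signature-character calculation in the style of the Adams--van Leeuwen--Trapa--Vogan algorithm, or by realizing the unipotent representations as global sections of equivariant quantizations of symplectic resolutions on which an invariant Hermitian pairing is manifestly positive. Without such an a priori positivity input, the microlocal definition of $\Pi_{\psi}^{\mathsf{Art}}(G,\sigma)$ gives no handle on the unitary structure of its constituents, which is exactly why the conjecture remains open.
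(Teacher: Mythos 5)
The statement you are asked about is a conjecture, and the paper does not prove it; what the paper does is give a concrete definition of the candidate packets $\Pi_\psi^{\mathsf{Art}}(G,\sigma)$ following \cite{AdamsBarbaschVogan} (Definition~\ref{def:Arthur}), observe that property~(i) is automatic, and record that property~(ii) is open. Your proposal correctly mirrors that high-level outline, and your assessment of the status of unitarity and your sketch via reduction to the unipotent case plus the Orbit Method are reasonable and roughly consistent with the paper's discussion around Theorem~\ref{thm:duality}. However, your microlocal definition of the packet is \emph{transposed} from the correct one, and this breaks your verification of~(i).

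The paper's Definition~\ref{def:Arthur} fixes the orbit $S$ corresponding to $\psi$ and then \emph{ranges over all} complete geometric parameters $(\xi,\tau)$, including $\pi(\xi,\tau)$ in the packet exactly when the conormal bundle $T^*_S X(\OO,G^L)$ appears with nonzero multiplicity in the characteristic cycle of $P(\xi,\tau)$. You instead fix a \emph{single} IC sheaf, namely $j_{!*}\mathcal{L}_{\mathrm{triv}}$ on $\overline{S_{\xi_\psi}}$, and ask which $(\xi',\tau')$ ``appear'' in its characteristic cycle. First, that condition is not well-posed: $\chi(j_{!*}\mathcal{L}_{\mathrm{triv}})$ is a $\ZZ$-linear combination of conormal bundles $T^*_{S'}$, which records orbits $S'$ but carries no information about local systems $\tau'$. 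Second, even repairing this to ``$T^*_{S_{\xi'}}$ appears,'' the construction is genuinely wrong. In the $\mathrm{PGL}(2)$ example of the paper (with $\psi=\psi^+$, $S=\{N\}\subset\mathbb{P}^1$), the IC sheaf on $\overline{\{N\}}=\{N\}$ is the skyscraper $\delta_N$, whose characteristic cycle is just $T^*_N\mathbb{P}^1$; your recipe would therefore produce only the trivial representation of $G_s$, whereas the correct packet $\Pi^{\mathsf{Art}}_{\psi^+}(G,[\sigma])$ also contains the trivial representation of $G_c$, coming from the perverse sheaf $j_!\mathcal{L}^U_{\mathrm{sgn}}[1]$ whose characteristic cycle includes $T^*_N\mathbb{P}^1$.

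Consequently your argument for~(i) does not go through as stated. The paper's argument is that $T^*_{G^\vee\xi}$ always occurs in $\chi(P(\xi,\tau))$ (the characteristic cycle of a perverse sheaf contains the conormal to its support), so every $(\xi_\psi,\tau)$ lands in $\Xi^{\mathsf{Art}}_\psi(G^L)$ and hence $\Pi_{\phi_\psi}\subset\Pi^{\mathsf{Art}}_\psi$. Under your transposed definition with a fixed trivial local system, the representations attached to nontrivial local systems on $S_{\xi_\psi}$ cannot be detected, so the $L$-packet containment fails. You should reverse the roles: quantify over perverse sheaves $P(\xi,\tau)$, and test membership of the fixed conormal $T^*_S$ in their characteristic cycles.
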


The packets $\Pi^{\mathsf{Art}}_{\psi}(G,\sigma)$ are also conjectured to possess an array of additional properties related to endoscopy and stability (see \cite[Chapter 1]{AdamsBarbaschVogan} for a detailed discussion). These additional properties are extremely important, but I will not discuss them here. Another desideratum, which is implicit in \cite{Arthur1983}, is that the Arthur packets should exhaust most (but not all) of $\Pi_u(G,\sigma)$. I will not attempt to make this more precise, but we will get some flavor of what it might mean in Example \ref{ex:PGL3}.

The packets $\Pi_{\psi}^{\mathsf{Art}}(G,\sigma)$ were defined in \cite{AdamsBarbaschVogan} in terms of microlocal geometry on the geometric parameter space $X(G^L)$. I will recall their definition here. For simplicity of exposition, we continue to assume that $G$ is adjoint. 

If $M$ is a $G^{\vee}$-equivariant perverse sheaf on $X(\OO,G^L)$, there is a \emph{characteristic cycle}
$$\chi(M) = \sum_{S \subset X(\OO,G^L)} \chi_S(M)\overline{T^*_SX(\OO,G^L)}, \qquad \chi_S(M) \in \ZZ_{\geq 0}$$
Here $T^*_SX(\OO,G^L) \subset T^*X(\OO,G^L)$ is the conormal bundle for $S \subset X(\OO,G^L)$ and the sum runs over all $G^{\vee}$-orbits $S$ in $X(\OO,G^L)$. This invariant is additive on short exact sequences, i.e. the functions $\chi_S$ can be regarded as $\ZZ$-linear functionals on the Grothendieck group $KX(\OO,G^L)$. 

\begin{definition}[Definition 19.15, \cite{AdamsBarbaschVogan}]\label{def:Arthur}
Let $\psi \in \Psi(G^L)$. Let $S \in \Xi_{\mathsf{Art}}(G^L)$ be the image of $\psi$ under the embedding (\ref{eq:injection}). Suppose $S \subset X(\OO,G^L)$ for a semisimple $G^{\vee}$-orbit $\OO \subset \fg^{\vee}$. Define
$$\Xi_{\psi}^{\mathsf{Art}}(G^L) := \{(\xi,\tau) \in \Xi^c(\OO,G^L) \mid \chi_S(P(\xi,\tau)) \neq 0\}$$
where $P(\xi,\tau) \in KX(\OO,G^L)$ is the class defined in (\ref{eq:perverse}). Then we define
$$\Pi_{\psi}^{\mathsf{Art}}(G, [\sigma]) := \{\pi(\xi,\tau) \mid (\xi,\tau) \in \Xi_{\psi}^{\mathsf{Art}}(\OO,G^L)\}, \qquad \Pi_{\psi}^{\mathsf{Art}}(G,\sigma) := \Pi_{\psi}^{\mathsf{Art}}(G,[\sigma]) \cap \Pi(G,\sigma)$$
\end{definition}

It is easy to see that the packets $\Pi_{\psi}^{\mathsf{Art}}(G,\sigma)$ of Definition \ref{def:Arthur} satisfy Condition (i) of Conjecture \ref{conj:Arthur} (this is a consequence of the following easy fact: the characteristic cycle of the perverse sheaf $P(\xi,\tau)$ includes the conormal bundle for its support $G^{\vee}\xi \subset X(\OO,G^L)$). Also established in \cite{AdamsBarbaschVogan} are the other properties alluded to after Conjecture \ref{conj:Arthur}. Notably, condition (ii) of Conjecture \ref{conj:Arthur} is \emph{not} proved in \cite{AdamsBarbaschVogan}. The unitarity of Arthur packets remains an open problem in general and is an active area of research.

\begin{example}
Let $G=\mathrm{PGL}(2,\CC)$. Fix the notation of Examples \ref{ex:PGL1}, \ref{ex:PGL2}, and \ref{ex:PGL3}. We will compute the set $\Psi(G^L)$ of Arthur parameters for $G^L$ and describe the corresponding Arthur packets. 

Up to conjugation by $G^{\vee}$, there are two algebraic homomorphisms $\mathrm{SL}(2,\CC) \to G^L$: the trivial map and the natural inclusion $\mathrm{SL}(2,\CC) = G^{\vee} \subset G^L$. If $\psi|_{\mathrm{SL}(2,\CC)}$ is the inclusion, there are two possibilities for $\psi$: either $\psi|_{W_{\RR}}$ is trivial or $\psi(W_{\RR}) = \Gamma \subset G^L$. Denote the corresponding (unipotent) Arthur parameters by $\psi^+$ and $\psi^-$, respectively. 

If $\psi|_{\mathrm{SL}(2,\CC)}$ is trivial, then $\psi|_{W_{\RR}}$ can be an arbitrary tempered Langlands parameter. According to Proposition \ref{prop:propsofylambda} and Example \ref{ex:PGL1}, the tempered Langlands parameters, up to conjugation by $G^{\vee}$, are given by the pairs $(y,\lambda)$ of the form
\begin{itemize}
    \item[(i)] $\lambda = \begin{pmatrix}a & 0\\0 & -a\end{pmatrix}$ and $ y=\begin{pmatrix}0 & 1\\-1 & 0\end{pmatrix}$ for $a = \frac{1}{2},\frac{3}{2}, ...$, or
    \item[(ii)] $\lambda = \begin{pmatrix}a & 0\\0 & -a\end{pmatrix}$ and $y =  \begin{pmatrix}\mathrm{exp}(\pi i a) & 0\\0 & \mathrm{exp}(-\pi i a)\end{pmatrix}$ for $a \in i\RR_{\geq 0}$, or
    \item[(iii)] $\lambda = \begin{pmatrix}a & 0\\0 & -a\end{pmatrix}$ and $y =  \begin{pmatrix}-\mathrm{exp}(\pi i a) & 0\\0 & -\mathrm{exp}(-\pi i a)\end{pmatrix}$ for $a \in i\RR_{\geq 0}$.
\end{itemize}
Denote the corresponding Arthur parameters by
\begin{itemize}
    \item[(i)] $\psi_a^{DS}$ for $a = \frac{1}{2},\frac{3}{2},...$ (`$DS$' stands for `discrete series').
    \item[(ii)] $\psi_a^{SP}$ for $a \in i\RR_{\geq 0}$ (`$SP$' stands for `spherical principal series')
    \item[(iii)] $\psi_a^{NSP}$ for $a \in i\RR_{\geq 0}$ (`$NSP$' stands for `non-spherical principal series')
\end{itemize}
Thus, we have determined
$$\Psi(G^L) = \{\psi^+,\psi^-\} \cup \{\psi^{DS}_a \mid a = \frac{1}{2},\frac{3}{2},...\} \cup \{\psi_a^{SP}, \psi_a^{NSP} \mid a \in i\RR_{\geq 0}\}.$$
Below, we describe the corresponding Langlands parameters $\phi_{\psi}$ (in terms of the pairs $(\lambda,y)$ of Proposition \ref{prop:params}) as well as the associated $L$-packets
\begin{table}[H]
\tiny
    \begin{tabular}{|c|c|c|c|c|} \hline
        $\psi$ &  $\lambda$ & $y$ & $\Pi_{(\lambda,y)}(G, \sigma_s)$ & $\Pi_{(\lambda,y)}(G, \sigma_c)$\\ \hline
        $\psi^+$ & $\begin{pmatrix}\frac{1}{2} & 0\\0 & -\frac{1}{2}\end{pmatrix}$ & $\begin{pmatrix}i &0\\0 & -i\end{pmatrix}$ & $\{\text{triv}\}$ & $\emptyset$\\ \hline
        $\psi^-$ & $\begin{pmatrix}\frac{1}{2} & 0\\0 & -\frac{1}{2}\end{pmatrix}$ & $\begin{pmatrix}-i &0\\0 & i\end{pmatrix}$ & $\{\text{sgn}\}$ & $\emptyset$\\ \hline
        $\psi^{DS}_a$ & $\begin{pmatrix}a & 0\\0 & -a\end{pmatrix}$ & $\begin{pmatrix}0 & 1\\-1 & 0\end{pmatrix}$ & $\{\text{discrete series of infl char }a\}$ & $\{\text{fin-dim of dim }a+\frac{1}{2}\}$\\ \hline
        
        $\psi_a^{SP}$ & $\begin{pmatrix}a & 0\\0 & -a\end{pmatrix}$ & $\begin{pmatrix} \exp(\pi i a) & 0 \\ 0 & \exp(-\pi i a) \end{pmatrix}$ & $\{\text{sph prin series of infl char } a\}$ & $\emptyset$\\ \hline 
        $\psi_a^{NSP}$ & $\begin{pmatrix}a & 0\\0 & -a\end{pmatrix}$ & $\begin{pmatrix} -\exp(\pi i a) & 0 \\ 0 & -\exp(-\pi i a) \end{pmatrix}$ & $\{\text{nonsph prin series of infl char } a\}$ & $\emptyset$\\ \hline 
    \end{tabular}
\end{table}
The tempered Langlands parameters
$$\{\phi_{\psi^{DS}_a} \mid a = \frac{1}{2},\frac{3}{2},...\} \cup \{\phi_{\psi^{SP}_a}, \phi_{\psi^{NSP}_a} \mid a \in i\RR_{\geq 0}\}$$
correspond under the bijection $\Phi(G^L) \xrightarrow{\sim} \Xi(G^L)$ to open $G^{\vee}$-orbits in varieties of the form $X(\OO,G^L)$. Thus, the Arthur packet $\Pi^{\mathsf{Art}}_{\psi}(G,[\sigma])$ coincides with the tempered $L$-packet $\Pi_{\phi_{\psi}}(G,[\sigma])$ for $\psi \in \{\psi_a^{DS}\} \cup \{\psi_a^{SP}, \psi_a^{NSP}\}$.

It remains to compute the unipotent Arthur packets $\Pi^{\mathsf{Art}}_{\psi^{\pm}}(G,[\sigma])$. Note that the Langlands parameters $\phi_{\psi^+}$ and $\phi_{\psi^-}$ correspond to the closed $G^{\vee}$-orbits $\{N\}$ and $\{S\}$, respectively, in the variety $X(\OO,G^L)$ for $\OO = G^{\vee}\rho \subset \fg^{\vee}$ (see Example \ref{ex:PGL3}). To compute the sets $\Xi^{\mathsf{Art}}_{\psi^{\pm}}(G^L)$, we must compute the characteristic cycles of the simple perverse sheaves $\mathcal{L}^{\mathbb{P}^1}_{\mathrm{triv}}[1]$ and $j_!\mathcal{L}_{\mathrm{sgn}}^U[1]$ on $\mathbb{P}^1(\CC)$ (see again Example \ref{ex:PGL3} for notation). If $X$ is a smooth irreducible curve and $P$ is a perverse sheaf on $X$, then it is easy to compute the characteristic cycle $\chi(P)$ of $P$. For a point $x \in X$, write $n_x$ for the Euler characteristic of the stalk of $P$ at $x$. Then $n_x$ takes a constant value $n$ on a Zariski-open subset $X \setminus \{x_1,...,x_m\}$ and $n_{x_i} > n$ for $i=1,...,m$. In this case
$$\chi(P) = -nX -\sum_{i=1}^m (n-n_{x_i})T_{x_i}^*X$$
see, for example, \cite[Example 4.3.21(iii)]{Dimca}.
So in our case, we obtain
\begin{align*}
\chi(\mathcal{L}^{\mathbb{P}^1}_{\mathrm{triv}}[1]) &= X(\OO,G^L)\\
\chi(j_!\mathcal{L}_{\mathrm{sgn}}^U[1]) &= X(\OO,G^L) + T^*_{N}X(\OO,G^L) + T^*_SX(\OO,G^L)\end{align*}
Hence, 
$$\Xi_{\psi^+}^{\mathsf{Art}}(G^L) = \{(\{N\},\mathcal{L}^N_{\mathrm{triv}}), (U,\mathcal{L}_{\mathrm{sgn}}^U)\}, \qquad \Xi_{\psi^-}^{\mathsf{Art}}(G^L) = \{(\{S\},\mathcal{L}^S_{\mathrm{triv}}), (U,\mathcal{L}_{\mathrm{sgn}}^U)\}$$
So
$$\Pi^{\mathsf{Art}}_{\psi^+}(G, [\sigma]) = \{\text{triv of } G_s, \text{triv of } G_c\}, \qquad \Pi^{\mathsf{Art}}_{\psi^-}(G, [\sigma]) = \{\text{sgn of } G_s, \text{triv of } G_c\}$$
\begin{table}[H]
\tiny
    \begin{tabular}{|c|c|c|c|c|} \hline
        $\psi$ &  $\lambda$ & $y$ & $\Pi^{\mathsf{Art}}_{\psi}(G,\sigma_s)$ & $\Pi^{\mathsf{Art}}_{\psi}(G,\sigma_c)$\\ \hline
        $\psi^+$ & $\begin{pmatrix}\frac{1}{2} & 0\\0 & -\frac{1}{2}\end{pmatrix}$ & $\begin{pmatrix}i &0\\0 & -i\end{pmatrix}$ & $\{\text{triv}\}$ & $\{\text{triv}\}$\\ \hline
        $\psi^-$ & $\begin{pmatrix}\frac{1}{2} & 0\\0 & -\frac{1}{2}\end{pmatrix}$ & $\begin{pmatrix}-i &0\\0 & i\end{pmatrix}$ & $\{\text{sgn}\}$ & $\{\text{triv}\}$\\ \hline
        $\psi_a^{DS}$ & $\begin{pmatrix}a & 0\\0 & -a\end{pmatrix}$ & $\begin{pmatrix}0 & 1\\-1 & 0\end{pmatrix}$ & $\{\text{discrete series of infl char }a\}$ & $\{\text{fin-dim of dim }a+\frac{1}{2}\}$\\ \hline
        
        $\psi_a^{SP}$ & $\begin{pmatrix}a & 0\\0 & -a\end{pmatrix}$ & $\begin{pmatrix} \exp(\pi i a) & 0 \\ 0 & \exp(-\pi i a) \end{pmatrix}$ & $\{\text{sph prin series of infl char } a\}$ & $\emptyset$\\ \hline 
        $\psi_a^{NSP}$ & $\begin{pmatrix}a & 0\\0 & -a\end{pmatrix}$ & $\begin{pmatrix} -\exp(\pi i a) & 0 \\ 0 & -\exp(-\pi i a) \end{pmatrix}$ & $\{\text{nonsph prin series of infl char } a\}$ & $\emptyset$\\ \hline 
    \end{tabular}
\end{table}
All of the representations in the table above are unitary. However, there are some unitary representations of $G_s$ which are \emph{not} contained in an Arthur packet, namely the unitary complementary series. This seems to be a general phenomenon.
\end{example}

We note that for quasi-split classical groups, Arthur in (\cite{Arthur2013}) has defined the packets $\Pi_{\psi}^{\mathsf{Art}}(G,\sigma)$ using rather different methods (his construction works over any local field). He proves that his packets satisfy all of the conjectured properties, including unitarity \cite[Theorems 1.5.1 and 2.2.1]{Arthur2013}. It is natural to ask whether the packets defined by Arthur coincide with those of Definition \ref{def:Arthur}. It was proved that they do in \cite{equivalentdefinitions} (modulo a small technical issue for even rank orthogonal groups).

\section{The Orbit Method}\label{sec:orbit}

Let $G_{\RR}$ be the real points of a connected reductive algebraic group defined over $\RR$ (since we will be working in this section with a \emph{single} real form of $G$, we will drop the symbol `$\sigma$' from all of our notation). Let $\mathfrak{g}_{\RR}$ be the Lie algebra of $G_{\RR}$. Note that $G_{\RR}$ acts on the real vector space $\Hom_{\RR}(\fg_{\RR},i\RR)$. By a \emph{real co-adjoint orbit} we will mean a $G_{\RR}$-orbit $\OO$ on $\Hom_{\RR}(\fg_{\RR},i\RR)$ and by a \emph{real co-adjoint cover} we will mean a homogeneous $G_{\RR}$-space $\widetilde{\OO}$ equipped with a finite $G_{\RR}$-equivariant map $\widetilde{\OO} \to \OO$ to a real co-adjoint orbit. Write $\mathsf{Orb}^{i\RR}(G_{\RR})$ (resp. $\mathsf{Cov}^{i\RR}(G_{\RR})$) for the set of  real co-adjoint orbits (resp. isomorphism classes of real co-adjoint covers). The \emph{orbit method} is a guiding philosophy in unitary representation theory, originating in the work of Kirillov (\cite{Kirillov1962}) and Kostant (\cite{Kostant1970}). Here is a (somewhat precise) formulation due to Vogan (\cite{Voganorbit}).

\begin{conj}\label{conj:orbit}
To each real co-adjoint cover $\widetilde{\OO} \in \mathsf{Cov}^{i\RR}(G_{\RR})$ (satisfying a suitable integrality condition) there is an associated finite set of irreducible unitary representations
$$\Pi^{\mathsf{Kir}}_{\widetilde{\OO}}(G_{\RR}) \subset \Pi_u(G_{\RR}).$$
The union 
    $$\bigcup_{\widetilde{\OO}} \Pi^{\mathsf{Kir}}_{\widetilde{\OO}}(G_{\RR})$$
should exhaust most of $\Pi_u(G_{\RR})$. 
\end{conj}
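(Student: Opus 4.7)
The plan is to construct the packets explicitly, since Conjecture \ref{conj:orbit} is really a construction-plus-properties statement rather than an assertion about pre-existing objects. My approach proceeds by first reducing to the case of \emph{nilpotent} covers via a form of parabolic induction, and then building the nilpotent packets from the filtered quantizations of \cite{LMBM}. Concretely, I would begin with an arbitrary cover $\widetilde{\OO} \in \mathsf{Cov}^{i\RR}(G_{\RR})$ and appeal to the Jordan decomposition on coadjoint orbits to exhibit $\widetilde{\OO}$ as birationally induced from a nilpotent cover $\widetilde{\OO}_L$ of a real Levi subgroup $L_{\RR} \subset G_{\RR}$, twisted by a unitary character $\chi$ of $L_{\RR}$. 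Given a definition of the unipotent packet $\Pi^{\mathsf{Kir}}_{\widetilde{\OO}_L}(L_{\RR})$, the general packet would then be defined as the set of irreducible constituents of $\mathrm{Ind}_{L_{\RR}}^{G_{\RR}}(\pi \otimes \chi)$ as $\pi$ ranges over $\Pi^{\mathsf{Kir}}_{\widetilde{\OO}_L}(L_{\RR})$. Unitary parabolic induction preserves unitarity, so the whole conjecture is reduced to the nilpotent case.

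Next I would define the nilpotent packets in the case when $G_{\RR}$ is complex. Given a nilpotent cover $\widetilde{\OO}$, the results of \cite{LMBM} produce a canonical filtered quantization $\mathcal{A}_{\widetilde{\OO}}$ of $\CC[\widetilde{\OO}]$ and a corresponding unipotent two-sided ideal $I_{\widetilde{\OO}} \subset U(\mathfrak{g})$. The candidate packet is the finite set of irreducible Harish-Chandra bimodules with annihilator $I_{\widetilde{\OO}}$ on both sides and associated variety equal to $\overline{\OO}$; under the standard equivalence between $(G \times G)$-Harish-Chandra bimodules and representations of the underlying complex group, this is automatically a finite set of irreducible representations. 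For general real $G_{\RR}$, the natural analogue is the set of irreducible $(\mathfrak{g},K)$-modules annihilated by a real form of $I_{\widetilde{\OO}}$ whose associated variety matches the image of $\widetilde{\OO}$ under the Kostant-Sekiguchi correspondence; the integrality hypothesis on $\widetilde{\OO}$ is precisely what is needed to equip the global sections of $\mathcal{A}_{\widetilde{\OO}}$ with a $(\mathfrak{g},K)$-module structure.

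The main obstacle, and the reason the statement is only conjectural, is \emph{unitarity}. There is no general mechanism producing a positive-definite invariant Hermitian form on Harish-Chandra modules cut out by $I_{\widetilde{\OO}}$, and historically proving unitarity even in well-understood cases has required delicate case-by-case arguments. The exhaustion assertion is a secondary but genuine obstacle: one cannot hope to cover all of $\Pi_u(G_{\RR})$, since complementary series representations can be unitary without lying in any orbit packet --- exactly the phenomenon observed at the end of Section \ref{sec:Arthur} for Arthur packets. My hope would be to verify unitarity indirectly, by matching the orbit packets constructed above with Arthur packets via the duality map $\mathsf{D}$ promised in Section \ref{sec:duality}, and then transferring the unitarity results of Arthur \cite{Arthur2013} for quasi-split classical groups across this matching; making such a transfer work in full generality, and in particular handling the complex case where Arthur's endoscopic methods are less sharp, is the step I expect to be genuinely hard.
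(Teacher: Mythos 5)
The statement is a conjecture attributed to Vogan, so there is no proof in the paper to match against; the paper instead supplies a precise construction (Sections \ref{sec:bind} onward) together with partial verifications, and your sketch follows the same overall plan --- reduce to nilpotent covers by induction from a Levi, build nilpotent packets from the unipotent ideals of \cite{LMBM}, flag unitarity as the open problem. Several pieces of your construction do not match the paper's, however, and one step would genuinely fail. In the general real case your induction formula $\Ind_{L_{\RR}}^{G_{\RR}}(\pi \otimes \chi)$ is too coarse: the $\theta$-stable Levi $L$ of Definition \ref{def:inductiondatumreal} need not arise from a real parabolic, so Definition \ref{def:Kirpacketsreal} uses a two-stage process --- cohomological induction $(\mathcal{R}^{\fl_1,L_{1,c}}_{\fq,\widetilde{L}_c})^{d(\Gamma)}$ from $(\fl,\widetilde{L}_c)$-modules to $(\fl_1,L_{1,c})$-modules, then real parabolic induction to $(\fg,K)$ --- and with real induction alone you would never produce, e.g., discrete series. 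You also omit the birational rigidity requirement on $\widetilde{\OO}_L$ (Proposition \ref{prop:bind}) that makes the minimal induction datum unique. Your phrase ``annihilated by a real form of $I_{\widetilde{\OO}}$'' is off: $I(\widetilde{\OO}) \subset U(\fg)$ already acts on any $(\fg,K)$-module, so Definition \ref{def:unipotentreal} simply imposes $\Ann_{U(\fg)}(X) = I(\widetilde{\OO})$ and pins down the associated variety separately. And the integrality condition of Definitions \ref{def:minimaldatum} and \ref{def:inductiondatumreal} is a condition on the twisting character $\xi$ or $\chi$ (it must exponentiate to a unitary character of the relevant Levi so that the induction makes sense), not a condition needed to give $\cA_0$ a $(\fg,K)$-structure; the $G$-action on the canonical quantization is automatic because $G$ is reductive.

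The step that would genuinely fail is your proposed route to unitarity through the duality map $\mathsf{D}$. The paper explicitly observes that $\mathsf{D}$ is not surjective --- for example, the metaplectic representations of $\mathrm{Sp}(2n,\CC)$ form Kirillov packets not matched to any Arthur packet --- so transporting unitarity along $\mathsf{D}$ leaves most Kirillov packets untouched. Moreover, Arthur's unitarity theorem in \cite{Arthur2013} concerns quasi-split classical groups, whereas unitarity of the ABV Arthur packets of Definition \ref{def:Arthur} is open in general, so in the cases you would need there is nothing to transfer. The unitarity results the paper actually invokes (for $\mathrm{SL}(n)$, $\mathrm{SO}(n)$, $\mathrm{Sp}(2n)$ in \cite{LMBM} and for simple exceptional groups in \cite{MBMat}) are proved directly on the orbit-method side; Theorem \ref{thm:duality} then shows Arthur's representations arise via the orbit method --- the opposite direction of inference from the one your plan requires.
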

There are several problems with Conjecture \ref{conj:orbit} (they are problems with our simplified formulation, not with the discussion in \cite{Voganorbit}):
\begin{enumerate}
    \item The `integrality condition' and the phrase `most of $\Pi_u(G_{\RR})$' have not been precisely defined.
    \item In some cases, we also expect packets attached to \emph{unions} of co-adjoint covers. 
\end{enumerate}

We will return to Problem 1 in Section \ref{sec:integrality}. Problem 2 will not concern us (it does not arise in the case of complex groups, which is the main case will consider).

Below, we will define the orbit method in the case of \emph{complex groups}. So let $G$ be a complex connected reductive algebraic group, regarded as a real Lie group by restriction of scalars. Of course, $G$ is the real points of a complex connected reductive algebraic group (isomorphic to the product $G \times G$), but to simplify the notation we will write $\Pi(G)$ (resp. $\Pi_u(G)$, resp. $\Pi^{\mathsf{Art}}_{\psi}(G)$) instead of $\Pi(G \times G,\sigma)$ (resp. $\Pi_u(G \times G,\sigma)$, resp. $\Pi^{\mathsf{Art}}_{\psi}(G \times G,\sigma)$). 

Let $\fg^*$ denote the \emph{complex} dual space, i.e. $\fg^* := \Hom_{\CC}(\fg,\CC)$. By a \emph{complex co-adjoint orbit} we will mean  a $G$-orbit $\OO$ on $\fg^*$ and by a \emph{complex co-adjoint cover} we will mean a homogeneous $G$-space $\widetilde{\OO}$ equipped with a finite $G$-equovariant map $\widetilde{\OO} \to \OO$ to a complex co-adjoint orbit. Write $\mathsf{Orb}(G)$ (resp. $\mathsf{Cov}(G)$) for the set of isomorphism classes of complex co-adjoint orbits (resp. co-adjoint covers). A co-adjoint cover is \emph{nilpotent} if the underlying orbit is nilpotent. To define our orbit method, it will be convenient to relate real and complex co-adjoint covers. This is accomplished using the following easy lemma. The trivial proof is omitted.

\begin{lemma}\label{lem:imaginary}
If $\mu \in \fg^*$, define $\iota(\mu) \in \Hom_{\RR}(\fg,i\RR)$ by
$$(\iota(\mu))(X) = \mathrm{Im}(\mu(X)).$$
where $\mathrm{Im}(\bullet)$ denotes the imaginary part of a complex number. Then $\mu \mapsto \iota(\mu)$ defines a $G$-equivariant isomorphism of real vector spaces
\begin{equation}\label{eq:iota}\iota: \fg^* \xrightarrow{\sim} \Hom_{\RR}(\fg,i\RR).\end{equation}
The inverse isomorphism is defined by the formula
$$\iota^{-1}(\lambda)(X) = \lambda(X) - i\lambda(iX).$$
The isomorphism $\iota$ induces a bijection (also denoted by $\iota$)
\begin{equation}\label{eq:iotaorb}\iota: \mathsf{Orb}(G) \xrightarrow{\sim} \mathsf{Orb}^{i\RR}(G)\end{equation}
If $\mu \in \fg^*$, then the centralizers $G^{\mu}$ and $G^{\iota(\mu)}$ coincide. So (\ref{eq:iotaorb}) lifts to a bijection (still denoted by $\iota$)
\begin{equation}\label{eq:iotacovers}\iota: \mathsf{Cov}(G) \xrightarrow{\sim} \mathsf{Cov}^{i\RR}(G)\end{equation}
\end{lemma}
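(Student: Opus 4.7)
The plan is to verify the lemma in four small steps: (a) confirm that $\iota$ is a well-defined $\RR$-linear map, (b) check that the displayed formula is actually the inverse, (c) verify $G$-equivariance (from which (\ref{eq:iotaorb}) is immediate), and (d) deduce (\ref{eq:iotacovers}) from the equality of stabilizers. The only calculation to perform is in step (b); everything else is formal.

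For (a)--(b) I would set up notation once and grind. Write an arbitrary $\mu \in \fg^*$ as $\mu = a + ib$ with $a, b \colon \fg \to \RR$ real-linear; the $\CC$-linearity of $\mu$ is equivalent to the pair of identities $a(iX) = -b(X)$ and $b(iX) = a(X)$. Then $\iota(\mu)$ corresponds to $b$ (viewed as $i\RR$-valued), which is manifestly $\RR$-linear in $\mu$. For the putative inverse, a one-line computation yields
$$\iota^{-1}(\lambda)(iX) = \lambda(iX) - i\lambda(-X) = \lambda(iX) + i\lambda(X) = i\bigl(\lambda(X) - i\lambda(iX)\bigr) = i \cdot \iota^{-1}(\lambda)(X),$$
so $\iota^{-1}(\lambda)$ is $\CC$-linear and hence lies in $\fg^*$. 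The identities $\iota^{-1} \circ \iota = \mathrm{id}$ and $\iota \circ \iota^{-1} = \mathrm{id}$ then follow by substituting into the formulas and applying $a(iX) = -b(X)$, $b(iX) = a(X)$.

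For (c), $G$-equivariance is immediate because taking imaginary parts commutes with the $G$-action on the values:
$$\iota(\Ad^*(g)\mu)(X) = \mathrm{Im}\bigl(\mu(\Ad(g^{-1})X)\bigr) = \iota(\mu)(\Ad(g^{-1})X) = \bigl(\Ad^*(g)\iota(\mu)\bigr)(X).$$
Being an equivariant $\RR$-linear bijection between $\fg^*$ and $\Hom_{\RR}(\fg, i\RR)$, $\iota$ carries $G$-orbits to $G$-orbits, giving (\ref{eq:iotaorb}). Applying $\iota$ and $\iota^{-1}$ then shows $g \cdot \mu = \mu$ if and only if $g \cdot \iota(\mu) = \iota(\mu)$, so $G^{\mu} = G^{\iota(\mu)}$ as subgroups of $G$.

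Finally, for (d): a complex co-adjoint cover of $\OO = G \cdot \mu \simeq G/G^{\mu}$ is, by definition, a homogeneous $G$-space $G/H'$ equipped with a finite equivariant map to $G/G^{\mu}$, which is the same datum as a finite-index subgroup $H' \subset G^{\mu}$; the analogous statement on the real side involves $G^{\iota(\mu)}$. Since $G^{\mu} = G^{\iota(\mu)}$ by (c), the two parameterizations of covers are canonically identified fiber-by-fiber over the orbit bijection, yielding (\ref{eq:iotacovers}). The only (mild) obstacle is the bookkeeping in step (b), confirming that the asymmetric-looking formula $\iota^{-1}(\lambda)(X) = \lambda(X) - i\lambda(iX)$ really does land in $\fg^*$ and really does invert $\iota$; once that is in hand, everything else is essentially definitional.
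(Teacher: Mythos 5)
The paper explicitly omits this proof as ``trivial,'' so there is no argument to compare against; your verification is correct and supplies exactly the routine details one would want. All four steps check out: the $\CC$-linearity of $\iota^{-1}(\lambda)$ follows from your one-line computation combined with $\RR$-linearity of $\lambda$, the mutually-inverse property is a direct substitution using $a(iX)=-b(X)$ and $b(iX)=a(X)$, $G$-equivariance is formal, and the equality of stabilizers plus the identification of covers of $G/H$ with finite-index subgroups of $H$ (up to conjugacy) gives~(\ref{eq:iotacovers}). One small point worth being explicit about, which you handle implicitly with the phrase ``$b$ viewed as $i\RR$-valued'': the paper's formula $(\iota(\mu))(X)=\mathrm{Im}(\mu(X))$ nominally lands in $\RR$ rather than $i\RR$, so one must read $\mathrm{Im}$ as returning the $i\RR$-valued component (equivalently, multiply the usual imaginary part by $i$). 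That this is the intended convention is confirmed by the stated inverse: if one instead used the real-valued imaginary part, the composite $\iota^{-1}\circ\iota$ would return $\overline{\mu}$ rather than $\mu$.
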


\subsection{Birational induction}\label{sec:bind}

\begin{definition}[Section 7.8, \cite{LMBM}, see also Definition 1.2, \cite{Losev4}]\label{def:inductiondatum}
A \emph{birational induction datum} is a triple
$$(L,\widetilde{\OO}_L,\xi)$$
consisting of
\begin{itemize}
    \item A Levi subgroup $L \subset G$.
    \item A complex nilpotent cover $\widetilde{\OO}_L$.
    \item An element $\xi \in \fz(\fl)^*$.
\end{itemize}
Note that $G$ acts by conjugation on the set of birational induction data. Denote the set of $G$-conjugacy classes by $\Omega(G)$.
\end{definition}

We will define a correspondence called \emph{birational induction} from birational induction data to complex co-adjoint covers. This map is essentially defined in \cite[Section 4]{Losev4}.

First, choose a parabolic subgroup $P = LN \subset G$ and write $\fp^{\perp}$ for the annihilator of $\fp$ in $\fg^*$. Consider the twisted generalized Springer map
$$\mu: G \times_P (\xi + \overline{\OO}_L + \fp^{\perp}) \to \fg^*$$
Here, we view $\xi+\overline{\OO}_L + \fp^{\perp}$ as a (closed, $P$-invariant) subset of $\fn^{\perp}$. The image of $\mu$ is the closure of a single co-adjoint $G$-orbit, denoted $\Ind (L,\OO_L,\xi) \subset \fg^*$ (if $\xi=0$, then $\Ind(L,\OO_L,\xi)$ is the induced nilpotent orbit in the sense of \cite{LusztigSpaltenstein}).

Next, let $\widetilde{X}_L := \Spec(\CC[\widetilde{\OO}_L])$. This is a normal, irreducible, affine variety with an action of $L$, an open embedding $\widetilde{\OO}_L \subset \widetilde{X}_L$, and a finite $L$-equivariant surjection $\mu_L:\widetilde{X}_L \to \overline{\OO}_L$. Varfieties of this form have been studied extensively in \cite{LMBM}. Let $P$ act on the variety $\{\xi\} \times \widetilde{X}_L \times \fp^{\perp}$ in the following manner: let $L$ act diagonally and $N$ by the formula
$$n \cdot (\xi,x,y)=(\xi,x,n\mu_L(x)-\mu_L(x)+ny),\quad n \in N, \quad x\in \widetilde{X}_L, \quad y\in \fp^\perp,$$
By our construction of the $P$-action on $\{\xi\} \times \widetilde{X}_L \times \fp^{\perp}$, there is a $P$-equivariant map $\{\xi\} \times \widetilde{X}_L \times \fp^{\perp} \to \xi+\overline{\OO}_L + \fp^{\perp}$ and hence a $G$-equivariant map $G \times^P (\{\xi\} \times \widetilde{X}_L \times \fp^{\perp}) \to G \times^P (\xi + \overline{\OO}_L + \fp^{\perp})$ of fiber bundles over $G/P$. Consider the composition
$$\widetilde{\mu}: G \times^P (\{\xi\} \times \widetilde{X}_L \times \mathfrak{p}^{\perp}) \to G \times^P (\xi+\overline{\mathbb{O}}_L + \mathfrak{p}^{\perp}) \overset{\mu}{\to} \fg^*.$$
The image of $\widetilde{\mu}$ is the closure of $\Ind (L,\OO_L,\xi)$ and the restriction of $\widetilde{\mu}$ to the preimage of $\Ind (L,\OO_L,\xi)$ is a (finite, connected, $G$-equivariant) cover, denoted $\mathrm{Bind} (L,\widetilde{\OO}_L,\xi)$. We note that the co-adjoint orbit $\Ind (L,\OO_L,\xi)$ and its cover $\mathrm{Bind} (L,\widetilde{\OO}_L,\xi)$ are both independent of the choice of parabolic $P$, see \cite[Lemma 4.1]{Losev4} and \cite[Corollary 4.3]{Mitya2020}. Moreover, conjugate birational induction data give rise to isomorphic covers. Thus, we obtain a well-defined map
\begin{equation}\label{eq:Bind}\mathrm{Bind}: \Omega(G) \to \mathsf{Cov}(G)\end{equation}
It is easy to see that this map is surjective.

A co-adjoint cover is \emph{birationally rigid} if it cannot be obtained via birational induction from a proper Levi subgroup (if a cover is birationally rigid, it must be nilpotent). Accordingly, a birational induction datum $(L,\widetilde{\OO}_L,\xi)$ is \emph{minimal} if $\widetilde{\OO}_L$ is birationally rigid. Note that the set of minimal birational induction data is stable under conjugation by $G$. Write $\Omega_m(G) \subset \Omega(G)$ for the subset of $G$-conjugacy classes of minimal birational induction data. The following is contained in \cite[Theorem 7.8.1]{LMBM} (where the proof is adapted from \cite[Corollary 4.6]{Losev4}). 

\begin{prop}\label{prop:bind}
The map (\ref{eq:Bind}) restricts to a bijection
$$\mathrm{Bind}: \Omega_m(G) \xrightarrow{\sim} \mathsf{Cov}(G)$$
\end{prop}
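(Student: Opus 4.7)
The plan is to separate the semisimple parameter $\xi$ from the nilpotent cover $\widetilde{\OO}_L$, reducing to the nilpotent case, and then to handle that case by a descent argument built from birational rigidity.

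\textbf{Step 1: Jordan decomposition.} Given $\widetilde{\OO} \in \mathsf{Cov}(G)$ covering an orbit $\OO \subset \fg^*$, fix $\mu \in \OO$. Identifying $\fg \simeq \fg^*$ via an invariant form, form the Jordan decomposition $\mu = \mu_s + \mu_n$. Set $L := Z_G(\mu_s)$; this is a Levi subgroup of $G$, and $\mu_s$ corresponds to an element $\xi \in \fz(\fl)^*$, while $\mu_n$ generates a nilpotent orbit $\OO_L \subset [\fl,\fl]^*$. The fibre of $\widetilde{\OO} \to \OO$ above $\mu$ is a finite $Z_G(\mu)$-set, which one checks is the fibre over $\mu_n$ of a unique $L$-equivariant finite cover $\widetilde{\OO}_L \to \OO_L$. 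Unwinding the construction in Section \ref{sec:bind}, one verifies that $\mathrm{Bind}(L,\widetilde{\OO}_L,\xi) = \widetilde{\OO}$. This shows every element of $\mathsf{Cov}(G)$ lies in the image of $\mathrm{Bind}$ from a (not necessarily minimal) datum.

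\textbf{Step 2: Transitivity.} I next want to establish the stage-by-stage formula
\[
\mathrm{Bind}\bigl(L,\mathrm{Bind}^{L}(L',\widetilde{\OO}'_{L'},0),\xi\bigr) \;=\; \mathrm{Bind}(L',\widetilde{\OO}'_{L'},\xi'),
\]
for any $L' \subset L$ and any $\xi' \in \fz(\fl')^*$ restricting to $\xi$ on $\fz(\fl)$. This follows from the standard double-fibration comparison, factoring the generalized Springer map
\[
G \times^{P'} \bigl(\{\xi'\} \times \widetilde{X}_{L'} \times (\fp')^{\perp}\bigr) \;\longrightarrow\; G \times^{P} \bigl(\{\xi\} \times \widetilde{X}_{L} \times \fp^{\perp}\bigr) \;\longrightarrow\; \fg^*,
\]
as done in \cite[Section 4]{Losev4}.

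\textbf{Step 3: Surjectivity onto $\mathsf{Cov}(G)$ from $\Omega_m(G)$.} If the cover $\widetilde{\OO}_L$ produced in Step 1 is birationally rigid, we are done. Otherwise, write $\widetilde{\OO}_L = \mathrm{Bind}^L(L',\widetilde{\OO}'_{L'},0)$ with $L' \subsetneq L$, and apply Step 2 to replace the datum by $(L',\widetilde{\OO}'_{L'},\xi')$. The Levi strictly shrinks at each iteration, so the process terminates at a minimal datum mapping to $\widetilde{\OO}$.

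\textbf{Step 4: Injectivity.} Suppose $(L_1,\widetilde{\OO}_{L_1},\xi_1)$ and $(L_2,\widetilde{\OO}_{L_2},\xi_2)$ are minimal data with the same image $\widetilde{\OO}$. The underlying orbit $\OO = \mathrm{Ind}(L_i,\OO_{L_i},\xi_i)$ is the same. The semisimple part of its Jordan decomposition is intrinsic, so the $\xi_i$ are $G$-conjugate. After conjugating, assume $\xi_1=\xi_2=\xi$ and $L_1,L_2 \subset Z_G(\xi)$. Running Step 1 backwards identifies both $L_i$ with $Z_G(\xi)$ and both $\widetilde{\OO}_{L_i}$ with the canonical nilpotent cover extracted from $\widetilde{\OO}$. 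One is then reduced to the statement that two birationally rigid nilpotent covers of a common Levi that induce to the same $G$-equivariant cover of a nilpotent orbit must already be isomorphic; this is the nilpotent case of the proposition, due to \cite[Corollary 4.6]{Losev4}.

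\textbf{Main obstacle.} The nontrivial technical input is not the Jordan-decomposition bookkeeping but precisely the nilpotent case underlying Step 4: that birational rigidity is the correct notion cutting out unique representatives in each fibre of $\mathrm{Bind}$. This is where one really needs the structure theory of the affinizations $\widetilde{X}_L = \Spec(\CC[\widetilde{\OO}_L])$ and their Poisson deformations developed in \cite{LMBM}, and it is the step I expect to be hardest to reproduce without appealing directly to those references.
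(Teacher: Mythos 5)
The overall strategy is sound and matches the route taken in the cited references: use the Jordan decomposition and transitivity of birational induction to reduce to the nilpotent case, which is the genuine content and comes from \cite[Corollary 4.6]{Losev4}. The paper itself gives no proof --- it simply points to \cite[Theorem 7.8.1]{LMBM} and Losev --- so your reconstruction of the reduction is the right thing to write.

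However, there is a slip in Step 4 that, as written, contradicts the rest of your argument. The sentence ``Running Step 1 backwards identifies both $L_i$ with $Z_G(\xi)$ and both $\widetilde{\OO}_{L_i}$ with the canonical nilpotent cover extracted from $\widetilde{\OO}$'' is false: in a minimal datum $(L_i,\widetilde{\OO}_{L_i},\xi)$ the Levi $L_i$ is generically a \emph{proper} subgroup of $Z_G(\xi)$ --- indeed Step 3 exists precisely because Step 1 alone does not land in $\Omega_m(G)$. What you should say is the following. Set $M:=Z_G(\xi)$ and let $\widetilde{\OO}_M$ be the nilpotent cover of $M$ extracted from $\widetilde{\OO}$ as in Step 1. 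By the transitivity of Step 2, $\widetilde{\OO}=\mathrm{Bind}\bigl(M,\mathrm{Bind}^M(L_i,\widetilde{\OO}_{L_i},0),\xi\bigr)$ for both $i$, and combining this with Step 1 applied to $(M,\xi)$ forces $\mathrm{Bind}^M(L_i,\widetilde{\OO}_{L_i},0)=\widetilde{\OO}_M$ for both $i$ --- here you need that $\mathrm{Bind}(M,-,\xi)$ is injective on nilpotent covers of $M$ when $M=Z_G(\xi)$. This is what actually reduces injectivity to the nilpotent case for the group $M$, and then \cite[Corollary 4.6]{Losev4} finishes.

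That injectivity of $\mathrm{Bind}(M,-,\xi)$ (equivalently: when $M=Z_G(\xi)$, the twisted Springer map $G\times^Q(\xi+\overline{\OO}_M+\mathfrak{q}^\perp)\to\overline{\OO}$ is an isomorphism over the open orbit, so the map $\widetilde{\mu}$ identifies the cover $\mathrm{Bind}(M,\widetilde{\OO}_M,\xi)\to\OO$ with the cover obtained from $\widetilde{\OO}_M\to\OO_M$ by taking the $Z_G(\xi+\mu_n)$-equivariant transport along $Z_G(\xi+\mu_n)=Z_M(\mu_n)$) is used implicitly in Step 1 (``one verifies''), again in Step 3 to know the iteration converges to the \emph{given} cover $\widetilde{\OO}$, and again in Step 4 as just described. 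It is a standard fact, but because it is carrying the whole reduction it should be isolated as a lemma and proved or cited, rather than asserted in passing.
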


\subsection{Unitary characters and integral co-adjoint covers}\label{sec:integrality}

Let $\widehat{G}_u$ denote the abelian group of unitary characters of $G$ (i.e. continuous homomorphisms $G \to S^1$). Choose a maximal torus $H \subset G$ and let $\mathfrak{h}$ be its Lie algebra. Use the symbol $\overline{\bullet}$ to denote complex conjugation on $\fh$ and on $\fh^*$. There is an identification (differentiation at the identity)
\begin{equation}\label{eq:id1}\widehat{G}_u \xrightarrow{\sim} \{\lambda \in \Hom_{\RR}(\fz(\fg),i\RR) \mid \frac{1}{2}(\lambda+\overline{\lambda}) \in X^*(H)\} =: \Hom_{\RR}(\fz(\fg),i\RR)_{int}\end{equation}
Using the isomorphism (\ref{eq:iota}) (applied to the vector space $\fz(\fg)$), we get a further identification
\begin{equation}\label{eq:id2}\widehat{G}_u \xrightarrow{\sim} \{\mu \in \fz(\fg)^* \mid \frac{1}{2}(\iota(\mu) + \overline{\iota(\mu)}) \in X^*(H)\} =: \fz(\fg)^*_{int}\end{equation}
The Langlands classification (for the abelian complex group $G/[G,G]$) provides a third isomorphism
\begin{equation}\label{eq:id3}\widehat{G}_u \xrightarrow{\sim} \{\text{bounded continuous homomorphisms } \CC^{\times} \to Z(G^{\vee})^{\circ}\} =: \Phi_{\mathrm{temp}}(Z(G^{\vee})^{\circ})\end{equation}
We are now prepared to define the `integrality condition' of Conjecture \ref{conj:orbit}.

\begin{definition}\label{def:minimaldatum}\ \\
\begin{itemize}
    \item A minimal birational induction datum $(L,\widetilde{\OO}_L,\xi)$ is \emph{integral} if 
$$\xi \in \fz(\fl)^*_{int},$$
see (\ref{eq:id2}). The set of integral minimal birational induction is stable under conjugation. Write $\Omega_{m,int}(G) \subset \Omega_m(G)$ for the set of $G$-conjugacy classes of integral minimal birational induction data. 
\item A complex co-adjoint cover $\widetilde{\OO} \in \mathsf{Cov}(G)$ is \emph{integral} if it corresponds under $\mathrm{Bind}$ to an integral minimal birational induction datum, see Proposition \ref{prop:bind}. Write $\mathsf{Cov}_{int}(G)$ for the set of (isomorphism classes of) integral complex co-adjoint covers.
\item A real co-adjoint cover $\widetilde{\OO} \in \mathsf{Cov}^{i\RR}(G)$ is \emph{integral} if it corresponds under $\iota$ (see Lemma \ref{lem:imaginary}) to an integral complex co-adjoint cover. Write $\mathsf{Cov}^{i\RR}_{int}(G)$ for the set of (isomorphism classes of) integral real co-adjoint covers.
\end{itemize}
\end{definition}

\subsection{Unipotent representations}

Let $\widetilde{\OO}$ be a complex nilpotent cover. In \cite{LMBM}, we construct a finite set of irreducible admissible $G$-representations
$$\mathrm{Unip}_{\widetilde{\OO}}(G) \subset \Pi(G)$$
The constituents of $\mathrm{Unip}_{\widetilde{\OO}}(G)$ are called \emph{unipotent representations} (this presents a small problem of terminology. These unipotent representations are a priori distinct from the representations attached to unipotent Arthur parameters (Definition \ref{def:Arthur}). However, we will see below that the latter set is contained in the former set, so we will not bother to make our terms more precise). We will now briefly review the construction of these sets and some of their essential properties.

The group $\CC^{\times}$ acts by dilation on $\OO \subset \fg^*$. Suitably normalized, this action can be lifted to the co-adjoint cover $\widetilde{\OO}$. There is also a natural symplectic structure on $\OO$ (defined using the Lie bracket). This too can be lifted to $\widetilde{\OO}$. Thus the ring of regular functions $\CC[\widetilde{\OO}]$ is a graded Poisson algebra. Accordingly, it is reasonable to talk about \emph{filtered quantizations} of $\CC[\widetilde{\OO}]$. Formally, these are pairs $(\cA,\theta)$ consisting of a an associative algebra $\cA$ equipped with an increasing filtration $\cA = \bigcup_i \cA_{\leq i}$ such that $[\cA_{\leq i},\cA_{\leq j}] \subseteq \cA_{\leq i+j-1}$ and an isomorphism of graded Poisson algebras $\theta: \gr(\cA) \xrightarrow{\sim} \CC[\widetilde{\OO}]$ (the Poisson bracket on $\gr(\cA)$ is defined by $\{x+\cA_{\leq i-1},y + \cA_{\leq j-1}\} = [x,y] + \cA_{i+j-2}$). A general discussion of filtered quantizations (relevant to the study of co-adjoint covers) can be found in \cite[Section 4]{LMBM}. 

Write $\mathrm{Quant}(\CC[\widetilde{\OO}])$ for the set of isomorphism classes of filtered quantizations of $\CC[\widetilde{\OO}]$. For an arbitrary graded Poisson algebra, filtered quantizations may be impossible to classify. But the algebra $\CC[\widetilde{\OO}]$ has a very special property which can be used to our advantage: its spectrum $\Spec(\CC[\widetilde{\OO}])$ has \emph{symplectic singularities} in the sense of Beauville \cite[Definition 1.1]{Beauville2000}. Because of this fact, we can apply the deep results of Losev \cite{Losev4} to parameterize the set $\mathrm{Quant}(\CC[\widetilde{\OO}])$. The conclusion is that there is a complex vector space $\fP$ (recovered from the geometry of $\Spec(\CC[\widetilde{\OO}])$), a finite reflection group $\mathcal{W} \subset \mathrm{GL}(\fP)$, and a canonical bijection
$$\fP/\mathcal{W} \xrightarrow{\sim} \mathrm{Quant}(\CC[\widetilde{\OO}])$$
This is \cite[Proposition 3.3, Theorem 3.4]{Losev4}. 

The \emph{canonical quantization} of $\CC[\widetilde{\OO}]$ is the filtered quantization $(\cA_0,\theta_0)$ corresponding to the point $0 \in \fP$. The $G$-action on $\CC[\widetilde{\OO}]$ lifts to a $G$-action on $\cA_0$ and the classical co-moment map $\CC[\fg^*] \to \CC[\widetilde{\OO}]$ (induced by the map $\widetilde{\OO} \to \fg^*$) lifts to a $G$-equivariant filtered algebra homomorphism $\Phi_0: U(\fg) \to \cA_0$ from the universal enveloping algebra $U(\fg)$ of $\fg$ (the first fact is automatic since $G$ is reductive, the second fact is \cite[Lemma 4.11.2]{LMBM}). 

\begin{definition}[Definition 6.0.1, \cite{LMBM}]
The \emph{unipotent ideal} attached to $\widetilde{\OO}$ is the two-sided ideal
$$I_0(\widetilde{\OO}) := \ker{(\Phi_0: U(\fg) \to \cA_0)}$$
\end{definition}

The unipotent ideal $I_0(\widetilde{\OO})$ is a completely-prime primitive ideal with associated variety $\overline{\OO}$ \cite[Proposition 6.1.2]{LMBM}. In fact, it is a \emph{maximal} ideal in $U(\fg)$ \cite[Theorem 5.0.1]{MBMat}. Recall that a $U(\fg)$-bimodule $M$ is \emph{Harish-Chandra} if the adjoint action of $\fg$
$$\ad(X)m = Xm-mX, \qquad X \in \fg, \ m \in M$$
integrates to an algebraic action of $G$. There is a well-known bijection between irreducible admissible $G$-representations and irreducible Harish-Chandra bimodules (this is a special case of a deep result of Harish-Chandra for real reductive Lie groups. The special case was first observed in \cite{Duflo1977}).

\begin{definition}[Definition 6.0.2, \cite{LMBM}]\label{def:unipotent} \ \\
\begin{itemize}
    \item A \emph{unipotent Harish-Chandra bimodule} attached to $\widetilde{\OO}$ is an irreducible Harish-Chandra bimodule which is annihilated on both sides by the unipotent ideal $I_0(\widetilde{\OO})$.
    \item A \emph{unipotent $G$-representation} attached to $\widetilde{\OO}$ is an irreducible admissible $G$-representation which corresponds to a unipotent Harish-Chandra bimodule attached to $\widetilde{\OO}$. 
    \item $\mathrm{Unip}_{\widetilde{\OO}}(G)$ is the (finite) set of equivalence classes of unipotent $G$-representations attached to $\widetilde{\OO}$. 
\end{itemize}
\end{definition}

In \cite{LMBM} and \cite{MBMat}, we establish a number of properties of unipotent representations. For example, we parameterize the sets $\mathrm{Unip}_{\widetilde{\OO}}(G)$ in terms of geometric data and compute the infinitesimal characters in all cases. We also prove, when $G = \mathrm{SL}(n)$, $\mathrm{SO}(n)$, or $\mathrm{Sp}(2n)$, that the constituents of $\mathrm{Unip}_{\widetilde{\OO}}(G)$ are unitary. We expect this to remain true in general, but we do not have a proof. 

\subsection{Kirillov packets}

We are now prepared to define the Kirillov packets $\Pi^{\mathsf{Kir}}_{\widetilde{\OO}}(G)$ for a complex group $G$. The basic idea is already contained in the writing of Vogan on the orbit method (see, \cite{Vogan1992}, \cite{Voganorbit}): use parabolic induction to reduce to the case when $\widetilde{\OO}$ is nilpotent. When $\widetilde{\OO}$ is nilpotent, we take $\Pi^{\mathsf{Kir}}_{\widetilde{\OO}}(G) = \mathrm{Unip}_{\widetilde{\OO}}(G)$ (the significant new idea, then, is a definition of $\mathrm{Unip}_{\widetilde{\OO}}(G)$). 

\begin{definition}\label{def:Kirpackets}
Let $\widetilde{\OO} \in \mathsf{Cov}^{i\RR}_{int}(G)$. Then $\widetilde{\OO}$ corresponds to an integral complex co-adjoint cover $\widetilde{\OO}' \in \mathsf{Cov}_{int}(G)$ under the bijection (\ref{eq:iotacovers}). Choose $(L,\widetilde{\OO}_L,\xi) \in \Omega_{m,int}(G)$ such that $\widetilde{\OO}' = \mathrm{Bind} (L, \widetilde{\OO}_L, \xi)$. By Proposition \ref{prop:bind}, $(L,\widetilde{\OO}_L,\xi)$ is unique up to conjugation by $G$. The element $\xi \in \fz(\fl)^*_{int}$ corresponds to an element $\iota(\xi) \in \Hom_{\RR}(\fz(\fl),i\RR)_{int}$ under the isomorphism (\ref{eq:iota}) (applied to the vector space $\fz(\fl)$) which in turn corresponds to a unitary character of $L$ under the bijection (\ref{eq:id1}) (applied to the group $L$). Denote this unitary character also by $\iota(\xi)$. The \emph{Kirillov packet} attached to $\widetilde{\OO}$ is the finite set of irreducible representations
\begin{align*}
\Pi^{\mathsf{Kir}}_{\widetilde{\OO}}(G) := \{X \in \Pi(G) \mid &\exists \ X_0 \in \mathrm{Unip}_{\widetilde{\OO}_L}(L) \text{ such that }\\
&X \text{ is a composition factor in } \Ind^G_P (X_0 \otimes \iota(\xi))\}\end{align*}
\end{definition}

\begin{rmk}
As stated in the remarks following Definition \ref{def:unipotent}, we believe that the constituents of $\mathrm{Unip}_{\widetilde{\OO}_L}(L)$ should be unitary (and we have proved this in many cases). Under this assumption, the induced representations $ \Ind^G_P (X_0 \otimes \iota(\xi))$ appearing in Definition \ref{def:Kirpackets} are unitary and completely reducible. So the constituents of $\Pi^{\mathsf{Kir}}_{\widetilde{\OO}}(G)$ are unitary as well.
\end{rmk}

\subsection{Duality}\label{sec:duality}

In this section, we will relate Arthur packets (Definition \ref{def:Arthur}) to Kirillov packets (Definition \ref{def:Kirpackets}) for complex groups. More precisely, we will construct a natural injective map from Arthur parameters $\Psi(G^{\vee})$ to integral real co-adjoint covers $\mathsf{Cov}^{i\RR}_{int}(G)$ such that the Arthur packet $\Pi_{\psi}^{\mathsf{Art}}(G)$ corresponding to an Arthur parameter $\psi$ coincides with the Kirillov packet $\Pi^{\mathsf{Kir}}_{\mathsf{D}(\psi)}(G)$ corresponding to the co-adjoint cover $\mathsf{D}(\psi)$. The main difficulty is defining such a map on \emph{unipotent} Arthur parameters. This was essentially done in \cite[Section 9]{LMBM}. 

Suppose $\psi \in \Psi(G^{\vee})$. Since $G$ is complex, we can think of $\psi$ as a continuous homomorphism $\CC^{\times} \times \mathrm{SL}(2,\CC) \to G^{\vee}$. Let $L^{\vee}$ denote the centralizer in $G^{\vee}$ of $\psi(\CC^{\times})$. Note that $L^{\vee}$ is a Levi subgroup of $G^{\vee}$ and hence the dual group of a Levi subgroup $L$ of $G$. Define a unipotent Arthur parameter $\psi_0 \in \Psi(L^{\vee})$
$$\psi_0: \CC^{\times} \times \mathrm{SL}(2,\CC) \to L^{\vee}, \qquad \psi_0(z,g) = \psi(g)$$
and a tempered Langlands parameter $\psi_1 \in \Phi_{\mathrm{temp}}(Z(L^{\vee})^{\circ})$
$$\psi_1: \CC^{\times} \to Z(L^{\vee})^{\circ}, \qquad \psi_1(z) = \psi(z,1).$$
In \cite[Section 9]{LMBM}, we define a complex nilpotent cover $\mathsf{D}_0(\psi_0) \in \mathsf{Cov}(L)$ such that
$$\Pi^{\mathsf{Art}}_{\psi_0}(L) = \mathrm{Unip}_{\mathsf{D}_0(\psi_0)}(L)$$
The tempered Langlands parameter $\psi_1$ corresponds via (\ref{eq:id3}) to a unitary character $\chi_1$ of $L$ and then via (\ref{eq:id2}) to an element $\xi_1 \in \fz(\fl)^*_{int}$. Consider the birational induction datum
$$(L,\mathsf{D}_0(\psi_0),\xi_1) \in \Omega(G)$$
and the complex co-adjoint cover
$$\mathrm{Bind} (L,\mathsf{D}_0(\psi_0),\xi_1) \in \mathsf{Cov}(G)$$
Since $\mathsf{D}_0(\psi_0)$ is nilpotent and $\xi_1$ is integral, the cover above is integral (i.e. contained in the set $\mathsf{Cov}_{int}(G)$). Thus, it corresponds via $\iota$ to an integral real co-adjoint cover, which we denote by $\mathsf{D}(\psi)$
$$\mathsf{D}(\psi) := \iota(\mathrm{Bind}(L,\mathsf{D}_0(\psi_0),\xi_1) \in \mathsf{Cov}^{i\RR}_{int}(G)$$
This defines a map
\begin{equation}\label{eq:duality}\mathsf{D}: \Psi(G) \to \mathsf{Cov}^{i\RR}_{int}(G)\end{equation}
\begin{theorem}\label{thm:duality}
The map (\ref{eq:duality}) has the following properties:
\begin{itemize}
    \item[(i)] $\mathsf{D}$ is injective.
    \item[(ii)] $\Pi^{\mathsf{Art}}_{\psi}(G) = \Pi^{\mathrm{Kir}}_{\mathsf{D}(\psi)}(G)$ for all $\psi \in \Psi(G^{\vee})$.
    \item[(iii)] If $\psi$ is a unipotent Arthur parameter, then $\mathsf{D}(\psi)$ is a nilpotent cover.
\end{itemize}
\end{theorem}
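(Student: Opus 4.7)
The plan is to treat the three assertions in increasing order of difficulty: (iii), then (ii), then (i).

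For part (iii), if $\psi \in \Psi(G^{\vee})$ is unipotent then by definition $\psi|_{\CC^\times} = 1$, hence $L^{\vee} = Z_{G^{\vee}}(\psi(\CC^{\times})) = G^{\vee}$, forcing $L = G$ and $\psi_1 = 1$. Consequently $\xi_1 = 0$, and the birational induction datum $(G, \mathsf{D}_0(\psi), 0)$ is sent by $\mathrm{Bind}$ to $\mathsf{D}_0(\psi)$ itself. Since $\mathsf{D}_0(\psi)$ is a complex nilpotent cover by construction in \cite[Section 9]{LMBM}, applying $\iota$ yields a real nilpotent cover.

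For part (ii), the principal step, and the main obstacle, is a parabolic induction compatibility for ABV Arthur packets: for $\psi$ decomposed as $(\psi_0, \psi_1)$ relative to $L^{\vee}$, one should have
\[
\Pi^{\mathsf{Art}}_\psi(G) = \{\text{irreducible composition factors of } \Ind^G_P(X_0 \otimes \chi_1) : X_0 \in \Pi^{\mathsf{Art}}_{\psi_0}(L)\},
\]
where $\chi_1$ is the unitary character of $L$ corresponding to $\psi_1$ under (\ref{eq:id3}). In the microlocal framework of Definition \ref{def:Arthur}, this amounts to showing that the geometric parameter orbit $S \subset X(G^L)$ attached to $\psi$ is induced from the orbit in $X(L^L)$ attached to $\psi_0$, and that the characteristic cycle functional $\chi_S$ behaves compatibly with parabolic induction of perverse sheaves up to the expected dimension shift. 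Once this is in hand, the identification $\Pi^{\mathsf{Art}}_{\psi_0}(L) = \mathrm{Unip}_{\mathsf{D}_0(\psi_0)}(L)$ from \cite[Section 9]{LMBM}, combined with Definition \ref{def:Kirpackets}, yields $\Pi^{\mathsf{Art}}_\psi(G) = \Pi^{\mathrm{Kir}}_{\mathsf{D}(\psi)}(G)$.

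For part (i), suppose $\mathsf{D}(\psi) = \mathsf{D}(\psi')$. Applying $\iota^{-1}$, the associated complex covers agree: $\mathrm{Bind}(L, \mathsf{D}_0(\psi_0), \xi_1) = \mathrm{Bind}(L', \mathsf{D}_0(\psi'_0), \xi'_1)$. By the construction in \cite[Section 9]{LMBM}, both $\mathsf{D}_0(\psi_0)$ and $\mathsf{D}_0(\psi'_0)$ are birationally rigid nilpotent covers, and the elements $\xi_1, \xi'_1$ are integral, so both triples lie in $\Omega_{m,int}(G)$. Proposition \ref{prop:bind} then forces them to be $G$-conjugate, which identifies $L \simeq L'$, $\mathsf{D}_0(\psi_0) \simeq \mathsf{D}_0(\psi'_0)$, and (modulo the identification of Levis) $\xi_1 = \xi'_1$. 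From $L$ we recover $L^{\vee}$ and hence the torus $\psi(\CC^\times) \subset Z(L^{\vee})^\circ$; from $\xi_1$ we recover $\psi_1$ via (\ref{eq:id2}) and (\ref{eq:id3}); from $\mathsf{D}_0(\psi_0)$ we recover $\psi_0$ up to conjugacy using injectivity of $\mathsf{D}_0$, itself built into the construction in \cite[Section 9]{LMBM}. Reassembling, $\psi$ and $\psi'$ are $G^{\vee}$-conjugate, completing the proof.
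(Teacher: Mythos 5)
Your proof follows the same route as the paper's own sketch: (iii) is unwound directly from the definition of $\mathsf{D}$, (i) is deduced from the injectivity of $\mathsf{D}_0$ together with the bijectivity of $\mathrm{Bind}$ on $\Omega_m(G)$ (Proposition \ref{prop:bind}), and (ii) is reduced to the parabolic-induction compatibility of ABV Arthur packets combined with the unipotent identification $\Pi^{\mathsf{Art}}_{\psi_0}(L)=\mathrm{Unip}_{\mathsf{D}_0(\psi_0)}(L)$ from \cite[Section 9]{LMBM}. The only real difference is that for the induction compatibility in (ii) you describe what a microlocal proof would require, whereas the paper simply cites \cite[Chapter 26]{AdamsBarbaschVogan}.
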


\begin{proof}[Proof sketch]
(i) follows immediately from the injectivity of $\mathsf{D}_0: \Psi_{unip}(L^{\vee}) \to \mathsf{Cov}(L)$ (\cite[Proposition 9.2.1(iii)]{LMBM}) and the injectivity of $\mathrm{Bind}: \Omega_m(G) \to \mathsf{Cov}(G)$ (Proposition \ref{prop:bind}). (iii) is immediate from the definition of $\mathsf{D}$. It remains to prove (ii). Since (ii) holds for all \emph{unipotent} Arthur parameters, it suffices to show that an arbitrary Arthur packet $\Pi^{\mathsf{Art}}_{\psi}(G)$ has a description analogous to Definition \ref{def:Kirpackets}, i.e.
\begin{align*}
\Pi^{\mathsf{Art}}_{\psi}(G) = \{X \in \Pi(G) \mid \ &\exists \  X_0 \in \Pi_{\psi_0}^{\mathsf{Art}}(L) \text{ such that }\\
&X \text{ is a composition factor in } \Ind^G_P (X_0 \otimes \chi_1))\}\end{align*}
This follows from the results of \cite[Chapter 26]{AdamsBarbaschVogan}.
\end{proof}

This result shows that all of Arthur's representations for complex groups arise via the orbit method. We note that the map $\mathsf{D}$ is not in general surjective, i.e. there are many (unitary) representations of $G$ not considered by Arthur which arise via the orbit method (for example, the metaplectic representations of $\mathrm{Sp}(2n,\CC)$).

\subsection{The general case}

In this section, we will sketch a generalization of our complex Orbit Method (Definition \ref{def:Kirpackets}) for arbitrary real groups. 

Let $G_{\RR}$ be the real points of a connected reductive algebraic group $G$. Choose a Cartan involution $\theta: G \to G$ and let $K \subset G$ denote the fixed points of $\theta$ (so that $K \cap G_{\RR} \subset G_{\RR}$ is a maximal compact subgroup). By a result of Harish-Chandra, the category of finite-length admissible representations of $G_{\RR}$ is naturally equivalent to the category $M(\fg,K)$ of finite-length $(\fg,K)$-modules. In particular, the set $\Pi(G_{\RR})$ of equivalence classes of irreducible admissible $G_{\RR}$-representations is in bijection with the set of equivalence classes of irreducible $(\fg,K)$-modules (in practice, we will often identify these two sets).

Let $\cN \subset \fg^*$ denote the nilpotent cone, and let $\fp \subset \fg$ denote the $-1$-eigenspace of the involution $d\theta$ of $\fg$. Then $K$ acts on $\cN_{\theta} := \cN \cap \fp^*$ with finitely many orbits (\cite[Corollary 5.22]{Vogan1991}), and each $K$-orbit is a Lagrangian subvariety of its $G$-saturation (\cite[Proposition 5.19]{Vogan1991}). Write $\mathsf{Orb}^{\theta}_{nilp}(G)$ for the set of $K$-orbits on $\cN_{\theta}$ and $\mathsf{Cov}^{\theta}_{nilp}(G)$ for the set of isomorphism classes of finite connected $K$-equivariant covers of $K$-orbits on $\cN_{\theta}$. The following result is essentially due to Kostant (\cite{KostantRallis1971}) and Sekiguchi (\cite{Sekiguchi1987}). The formulation below comes from \cite[Section 6]{Vogan1991}.

\begin{prop}\label{prop:KostantSekiguchi}
There is a natural bijection
\begin{equation}\label{eq:KostantSekiguchi1}\iota: \mathsf{Orb}^{\theta}_{nilp}(G) \xrightarrow{\sim} \mathsf{Orb}^{i\RR}_{nilp}(G_{\RR})\end{equation}
such that
$$G\OO = G\iota(\OO), \qquad \OO \in  \mathsf{Orb}^{\theta}_{nilp}(G).$$
For each $\OO \in  \mathsf{Orb}^{\theta}_{nilp}(G)$, the $K$-equivariant fundamental group of $\OO$ is naturally identified with the $G_{\RR}$-equivariant fundamental group of $\iota(\OO)$. So (\ref{eq:KostantSekiguchi1}) lifts to a bijection
\begin{equation}\label{eq:kostantSekiguchi2}
    \iota: \mathsf{Cov}^{\theta}_{nilp}(G) \xrightarrow{\sim} \mathsf{Cov}^{i\RR}_{nilp}(G_{\RR})
\end{equation}
\end{prop}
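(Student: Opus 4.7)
The plan is to reduce the statement to the classical Kostant--Sekiguchi correspondence. First I would fix a nondegenerate invariant bilinear form on $\fg$ to identify $\fg^* \cong \fg$ (restricting to $\fp^* \cong \fp$) and combine with the isomorphism of Lemma \ref{lem:imaginary} to identify $\Hom_\RR(\fg_\RR, i\RR) \cong i\fg_\RR$. Under these identifications, the statement becomes a bijection between $K$-orbits on $\cN \cap \fp$ and $G_\RR$-orbits on $\cN \cap i\fg_\RR$, with matching $G$-saturation---which is Sekiguchi's original formulation.

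Next I would parameterize both sides by conjugacy classes of $\mathfrak{sl}(2,\CC)$-triples. By the Kostant--Rallis theorem, every nonzero $e \in \cN \cap \fp$ embeds in a \emph{normal} triple $(e,h,f)$ with $h \in \fk$ and $f \in \fp$, and this sets up a bijection between nonzero $K$-orbits on $\cN \cap \fp$ and $K$-conjugacy classes of normal triples. The Jacobson--Morozov theorem over $\RR$ yields an analogous parameterization of nonzero $G_\RR$-orbits on $\cN \cap i\fg_\RR$ by $G_\RR$-conjugacy classes of real triples. The Cayley transform, an element $c \in G$ built explicitly from a triple, converts normal triples into real triples and vice versa, producing the bijection $\iota$ on orbits. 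Since $c \in G$, the nilpotents $e$ and $\iota(e)$ lie in a common complex $G$-orbit, giving $G\OO = G\iota(\OO)$.

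For the lift to covers, I would use the fact that the $K$-equivariant fundamental group of an orbit $\OO \cong K/K^e$ is the component group $K^e/(K^e)^\circ$, and similarly for $G_\RR/G_\RR^{\iota(e)}$. The Cayley transform carries $K^e$ onto the fixed-point subgroup of a Cartan involution of the real reductive group $G_\RR^{\iota(e)}$, i.e. onto a maximal compact subgroup. Since the inclusion of a maximal compact subgroup into a real reductive group is a homotopy equivalence, we obtain $\pi_0(K^e) \xrightarrow{\sim} \pi_0(G_\RR^{\iota(e)})$, lifting $\iota$ to the bijection~(\ref{eq:kostantSekiguchi2}).

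The main obstacle will be naturality: one must check that the identification $\pi_0(K^e) \xrightarrow{\sim} \pi_0(G_\RR^{\iota(e)})$ is independent of the choices of representative $e \in \OO$ and of the normal triple extending it. The cleanest route is to invoke Vergne's theorem, which upgrades the orbit bijection to a $K$-equivariant diffeomorphism $\OO \xrightarrow{\sim} \iota(\OO)$; naturality of the resulting isomorphism of stabilizers and their component groups then follows formally from $K$-equivariance.
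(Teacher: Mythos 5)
The paper does not prove this statement itself; it cites Kostant--Rallis, Sekiguchi, and \cite[Section 6]{Vogan1991}, so there is no proof in the text to compare against. Your overall plan---reduce to the classical Kostant--Sekiguchi correspondence via normal and Cayley triples to get the orbit bijection and the matching of $G$-saturations, then match component groups of stabilizers to lift to covers, appealing to Vergne for naturality---is the right one, and the first half is sound. But the key step in the lift contains an error: you assert that the Cayley transform $\Ad(c)$ carries $K^e$ onto a maximal compact subgroup of $G_{\RR}^{\iota(e)}$. This cannot be true in general. The group $K^e = K \cap G^e$ is a complex algebraic group, of real dimension $2\dim_{\CC}K^e$, whereas a maximal compact subgroup of $G_{\RR}^{\iota(e)}$ is a compact real form of the Levi factor and has real dimension equal to $\dim_{\CC}K^{(x,y,z)}$ (where $(x,y,z)$ is the normal triple through $x=e$). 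Since $K^{(x,y,z)}$ is a subgroup of $K^e$, one has $2\dim_{\CC}K^e \geq 2\dim_{\CC}K^{(x,y,z)} \geq \dim_{\CC}K^{(x,y,z)}$, with equality of the two extremes only when $K^e$ is finite; so no conjugation can carry $K^e$ onto such a subgroup in the generic positive-dimensional case.

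The correct mechanism, which does repair your argument, is that $K^e$ and $G_{\RR}^{\iota(e)}$ share a \emph{common} maximal compact rather than one being carried onto a maximal compact of the other. Let $K_{\RR}=K\cap G_{\RR}$, let $(e',h',f')$ be the Cayley triple in $\fg_{\RR}$ with $\iota(e)$ in its $K_{\RR}$-orbit, and let $(x,y,z)=\Ad(c)(e',h',f')$ be the associated normal triple with $x=e$. The two triples span the same complex $\mathfrak{sl}(2,\CC)$-subalgebra of $\fg$, so $G^{(e',h',f')}=G^{(x,y,z)}$ as subgroups of $G$ (and $\Ad(c)$ acts trivially on it). Since $\sigma$ preserves this centralizer, $K_{\RR}\cap G^{(x,y,z)}$ is a compact real form of the Levi factor $K^{(x,y,z)}$ of $K^e$; since $\theta$ also preserves it, the very same group $K_{\RR}\cap G^{(e',h',f')}$ is a maximal compact of the Levi factor $G_{\RR}^{(e',h',f')}$ of $G_{\RR}^{\iota(e)}$. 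Passing to $\pi_0$ on both sides gives the desired canonical isomorphism of component groups. Finally, a small slip: Vergne's theorem produces a $K_{\RR}$-equivariant diffeomorphism $\OO\to\iota(\OO)$, not a $K$-equivariant one (the complex group $K$ does not act on the real orbit), and naturality of the component-group isomorphism should be phrased in terms of that $K_{\RR}$-equivariance.
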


To each irreducible irreducible $(\fg,K)$-module $X$, one can attach a closed $K$-invariant subset $V(X) \subset \cN_{\theta}$ called the \emph{associated variety} of $X$, see \cite[Corollary 5.13]{Vogan1991}. In general, $V(X)$ may not be irreducible (although it is always equidimensional).

The first step towards defining an Orbit Method for $G_{\RR}$ is to define a suitable set of `building blocks'. 

\begin{definition}\label{def:unipotentreal}
Let $\widetilde{\OO}$ be a birationally rigid nilpotent cover for $G$ and let $\OO_{\theta}$ be a $K$-orbit on $\OO \cap \mathfrak{p}^*$. A \emph{unipotent representation} attached to the pair $(\widetilde{\OO}, \OO_{\theta})$ is an irreducible $(\fg,K)$-module $X$ such that
\begin{itemize}
    \item[(i)] $\Ann_{U(\fg)}(X) = I(\widetilde{\OO})$.
    \item[(ii)] $V(X) = \overline{\OO}_{\theta}$.
\end{itemize}
Write $\mathrm{Unip}_{(\widetilde{\OO},\OO_{\theta})}(G_{\RR})$ for the (finite) set of equivalence classes of unipotent representations attached to $(\widetilde{\OO},\OO_{\theta})$. 
\end{definition}

The nilpotent covers $\widetilde{\OO}$ and unipotent ideals $I(\widetilde{\OO})$ which appear in Definition \ref{def:unipotentreal} were computed in all cases in \cite{LMBM} and \cite{MBMat}. For our construction to make sense, the following should be verified.

\begin{conj}\label{conj:unitarity}
The following are true:
\begin{itemize}
\item[(i)] For every pair $(\widetilde{\OO},\OO_{\theta})$ as in Definition \ref{def:unipotentreal}, the 
constituents of $\mathrm{Unip}_{(\widetilde{\OO},\OO_{\theta})}(G_{\RR})$ are unitary.
\item[(ii)] The unipotent ideals $I(\widetilde{\OO})$ appearing in Definition \ref{def:unipotentreal} are \emph{weakly unipotent} (cf. \cite[Definition 12.3]{KnappVogan1995}). 
\end{itemize}
\end{conj}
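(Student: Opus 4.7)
The plan is to dispatch (ii) first by reduction to a combinatorial check that leverages the explicit description of unipotent ideals from \cite{LMBM} and \cite{MBMat}, and then use (ii) to attack (i) via the standard principle that weakly unipotent Hermitian modules tend to be unitary.

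For (ii), recall that weak unipotence in the sense of \cite[Definition 12.3]{KnappVogan1995} is a concrete combinatorial condition on any representative $\lambda \in \fh^*$ of the infinitesimal character of $I(\widetilde{\OO})$: for every $w \in W$ with $w\lambda - \lambda$ in the integral root lattice, one requires $\mathrm{Re}\langle w\lambda, \alpha^\vee\rangle \geq 0$ for every coroot $\alpha^\vee$ at which $\lambda$ is integral. The infinitesimal characters $\lambda_{\widetilde{\OO}}$ were computed explicitly in \cite{LMBM} and \cite{MBMat} and are controlled by the birational rigidity hypothesis: $\lambda_{\widetilde{\OO}}$ sits at a distinguished ``small'' point in the closure of the dominant chamber, morally a half-sum $\tfrac{1}{2}h$ for an $\mathfrak{sl}_2$-triple attached to $\OO$, twisted by a small correction reflecting the choice of cover. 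My plan is to first reduce to the case of simple $\fg$ by the usual factorization of $I(\widetilde{\OO})$ through tensor products, and then verify the weak unipotence bound type by type using the classification of birationally rigid covers in \cite{LMBM}. A more conceptual route, which I would pursue in parallel, is to exploit the fact that $I(\widetilde{\OO})$ is the kernel of the map $U(\fg) \to \cA_0$ to the canonical quantization of $\CC[\widetilde{\OO}]$: the $\CC^\times$-action used to build $\cA_0$ forces $\lambda_{\widetilde{\OO}}$ to lie in a very constrained locus, and a structural argument along these lines might yield (ii) uniformly.

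Granted (ii), the plan for (i) proceeds in two steps. First, show that every $X \in \mathrm{Unip}_{(\widetilde{\OO},\OO_\theta)}(G_\RR)$ admits a non-degenerate invariant Hermitian form. This should follow from the maximality of $I(\widetilde{\OO})$ (\cite[Theorem 5.0.1]{MBMat}), which forces $I(\widetilde{\OO})$ to be preserved by the Hermitian-dual involution on $U(\fg)$, together with a stability argument for the associated variety $\overline{\OO}_\theta$ under the same involution; the form is then produced by the irreducibility of $X$ and the finiteness of $\mathrm{Unip}_{(\widetilde{\OO},\OO_\theta)}(G_\RR)$. Second, show that this Hermitian form is positive definite. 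Here one invokes the Parthasarathy--Vogan Dirac inequality: for a module with weakly unipotent annihilator, the Dirac inequality bounds below the infinitesimal character in terms of the lowest $K$-types, and the explicit form of $\lambda_{\widetilde{\OO}}$ derived in step (ii) makes the inequality tight, forcing positivity. In favorable cases, one can alternatively realize $X$ as the sheaf of sections of a $K$-equivariant quantized line bundle on $\widetilde{\OO} \cap \fp^*$ via the symplectic localization framework underlying \cite{LMBM}, and obtain positivity directly from the geometric construction.

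The main obstacle will be the positivity in the second step of part (i). Unitarity of unipotent representations is genuinely open even for complex exceptional groups, and the existing proofs for complex classical groups in \cite{LMBM} rely on explicit identification with cohomologically or unitarily induced modules --- identifications that are not available in general. A uniform solution will likely require either a strengthened Dirac inequality adapted to modules supported on arbitrary $K$-orbit closures in $\cN_\theta$, or a fundamentally geometric construction of $X$ (via a Hermitian refinement of the deformation quantization of $\CC[\widetilde{\OO}]$ combined with the Kostant--Sekiguchi correspondence in Proposition \ref{prop:KostantSekiguchi}) that manifestly produces a positive form on global sections. Without one of these ingredients, part (i) seems out of reach by purely algebraic means.
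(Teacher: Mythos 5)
This statement is a \emph{conjecture} in the paper, not a theorem; the paper offers no proof of either part. Immediately after the conjecture the author records only partial evidence: part (i) has been verified in \cite[Theorem 6.1.1]{MBMat} when $G$ is simple exceptional and $\widetilde{\OO}$ is an orbit (not a nontrivial cover), and part (ii) is asserted to be ``very easy to check'' from the explicit infinitesimal-character computations in \cite{MBMat}, but no argument is given. So there is no proof in the paper to compare your proposal against, and you should be aware that what you are being asked to prove is an open problem.

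Concerning the substance of your proposal: for (ii) your reduction to a type-by-type check on the infinitesimal characters $\lambda_{\widetilde{\OO}}$ computed in \cite{LMBM,MBMat} is exactly the kind of verification the author gestures at, and is plausible, though you should double-check your paraphrase of \cite[Definition 12.3]{KnappVogan1995}: weak unipotence is not quite the dominance condition you wrote but a norm-minimality condition over a translation family (together with compatibility with the central character), and the ``conceptual'' alternative you sketch via the $\CC^\times$-action on $\cA_0$ is not obviously enough to pin this down without the explicit list. For (i), your two-step plan (produce a Hermitian form, then establish positivity via a Dirac-type inequality) is the standard template, but the first step is not automatic either: maximality of $I(\widetilde{\OO})$ and stability of the associated variety under the Hermitian-dual involution show that the dual of $X$ lies in the same finite set $\mathrm{Unip}_{(\widetilde{\OO},\OO_\theta)}(G_\RR)$, but do not by themselves force $X$ to be self-dual. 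More importantly, you correctly identify that the positivity step is genuinely open and that no uniform argument is currently available. In short, your proposal is a reasonable research program, not a proof, and you say as much yourself --- which is the honest reading of a statement the paper itself labels a conjecture.
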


For $G$ simple exceptional and $\widetilde{\OO}$ an orbit, Conjecture \ref{conj:unitarity}(i) was verified in \cite[Theorem 6.1.1]{MBMat}. Conjecture \ref{conj:unitarity}(ii) should be very easy to check using the computations in \cite{MBMat}.

We will now introduce the appropriate generalization of Definition \ref{def:Kirpackets}. This will require several small digressions. Suppose $L$ is a $\theta$-stable Levi subgroup of $G$. Let
$$L_c := L^{\theta}, \qquad \mathfrak{l}_n := \mathfrak{l}^{-d\theta}.$$
and similarly
$$Z_c(L) := Z(L)^{\theta}, \qquad \mathfrak{z}_n(\fl) := \fz(\fl)^{-d\theta}$$
Consider the Levi subgroup
$$L_1 := Z_G(\fz_n(\fl)) \subset G$$
Since $\fz_n(\fl)$ is $\theta$-stable, $L_1$ is $\theta$-stable. Write
$$L_{1,c} = L_1^{\theta}, \qquad L_{1,n} = \fl_1^{-d\theta}.$$
There is an inclusion $L \subseteq L_1$ and $L = Z_{L_1}(\fz_c(\fl))$. Choose a Cartan subalgebra $\fh \subset \fl$ and let $\Delta := \Delta(\fh,\fg)$, the roots of $\fh$ on $\fg$. Then 
$$\Delta_1:=\Delta(\fh,\fl_1) = \{\alpha \in \Delta \mid \alpha(\fz_n(\fl)) = 0\}$$
If we choose a regular element $t \in \fz_c(\fl)^*$ in the real span of $\Delta_1$, we get a parabolic subalgebra $\mathfrak{q}  \subset \mathfrak{l}_1$ defined by
$$\mathfrak{q}=\mathfrak{l}\oplus \mathfrak{u}, \qquad \mathfrak{u} = \bigoplus_{\alpha \in \Delta_1, \alpha(t) >0} \fg_{\alpha}$$
Since $\theta(t)=t$, we have that $\theta \mathfrak{q} = \mathfrak{q}$.

Similarly, if we choose a regular element $a \in \fz_n(\fl)^*$ in the real span of $\Delta$, we get a parabolic subalgebra
$$\mathfrak{q}_1 = \mathfrak{l}_1 \oplus \mathfrak{u}_1, \qquad \mathfrak{u}_1 = \bigoplus_{\alpha \in \Delta, \alpha(a) >0} \fg_{\alpha}$$
Since $\theta(a) = -a$, we have that $\theta\mathfrak{q}_1 = \mathfrak{q}_1^{\mathrm{op}}$ (the opposite parabolic).

Write $2\rho(\mathfrak{u})$ for the determinant character of $L$ on $\mathfrak{u}$
$$2\rho(\mathfrak{u})(l) = \det(\Ad(l)|_{\mathfrak{u}}), \qquad l \in L.$$
Form the fiber product
\begin{center}
    \begin{tikzcd}
    \widetilde{L} \ar[r] \ar[d] & \CC^{\times} \ar[d,"(\bullet)^2"]\\
    L \ar[r,"2\rho(\mathfrak{u})"] & \CC^{\times}
    \end{tikzcd}
\end{center}
The left vertical map $\widetilde{L} \to L$ is a 2-fold cover of $L$ and the top horizontal map $\widetilde{L} \to \CC^{\times}$ is a genuine character of $\widetilde{L}$, which we will denote by $\rho(\mathfrak{u})$. It is not hard to see that the cover $\widetilde{L}$ of $L$ and the character $\rho(\mathfrak{u})$ of $\widetilde{L}$ are independent (up to canonical isomorphisms) of the choice of parabolic $\mathfrak{q}$, see the remarks following \cite[Proposition 1.35]{Vogan1987}. We will write $\widetilde{L}_c$ for the preimage of $L_c$ under the homomorphism $\widetilde{L} \to L$.

\begin{definition}\label{def:inductiondatumreal}
An integral minimal birational induction datum is a quintuple
$$(L,\widetilde{\OO}_L,\OO_{L,\theta},\mathfrak{q},\chi)$$
consisting of
\begin{itemize}
    \item A $\theta$-stable Levi subgroup $L \subset G$.
    \item A birationally rigid cover $\widetilde{\OO}_L$ of a nilpotent co-adjoint orbit $\OO_L \subset \fl^*$.
    \item An $L_c$-orbit $\OO_{L,\theta}$ on $\OO_L \cap \mathfrak{l}_n^*$.
    \item A $\theta$-stable parabolic $\mathfrak{q}=\mathfrak{l}\oplus \mathfrak{u}$ in $\mathfrak{l}_1$.
    \item A genuine one-dimensional unitary Harish-Chandra $(\fl,\widetilde{L}_c)$-module $\chi$.
\end{itemize}
subject to the condition
\begin{equation}\label{eq:dominance}\langle d\chi, \alpha^{\vee} \rangle \geq 0, \qquad \forall \alpha \in \Delta_1^+.\end{equation}
Write $\Omega_{m,int}(G_{\RR})$ for the set $K$-conjugacy classes of integral minimal birational induction data. 
\end{definition}

Given an integral minimal birational induction datum $\Gamma = (L,\widetilde{\OO}_L,\OO_{L,\theta},\mathfrak{q},\chi)$, we will construct a finite packet $\Pi_{\Gamma}^{\mathsf{Kir}}(G_{\RR})$ of irreducible (conjecturally unitary) representations of $G_{\RR}$. The construction proceeds in stages. There are cohomological induction functors
$$(\mathcal{R}^{\fl_1,L_{1,c}}_{\fq,\widetilde{L}_c})^j: M(\fl,\widetilde{L}_c) \to M(\fl_1,L_{1,c}), \qquad j \in \ZZ_{\geq 0}$$
see \cite[Theorem 6.8]{Vogan1987}. These functors preserve infinitesimal character. For each $X_0 \in \mathrm{Unip}_{(\widetilde{\OO}_L,\OO_{L,\theta})}(L_{\RR})$, consider
$$(\mathcal{R}^{\fl_1,L_{1,c}}_{\fq,\widetilde{L}_c})^j(\chi \otimes X_0) \in M(\fl_1,L_{1,c}), \qquad j \in \ZZ_{\geq 0}$$
Because of the dominance condition on $\chi$ (\ref{eq:dominance}), the inducing module $\chi \otimes X_0$ is in the \emph{weakly fair range} (with respect to the parabolic $\mathfrak{q}$). So if we assume Conjecture \ref{conj:unitarity}, then 
\begin{itemize}
    \item $(\mathcal{R}^{\fl_1,L_{1,c}}_{\fq,\widetilde{L}_c})^j(\chi \otimes X_0)=0$ if $j \neq d(\Gamma)$, where
$$d(\Gamma) := \dim(\mathfrak{u} \cap \mathfrak{k}).$$
    \item $(\mathcal{R}^{\fl_1,L_{1,c}}_{\fq,\widetilde{L}_c})^{d(\Gamma)}(\chi \otimes X_0)$ is unitary.
\end{itemize}
This is \cite[Theorem 12.4]{KnappVogan1995}.

Next, choose a parabolic $Q_1 = L_1U_1$ of $G$ such that $\theta Q_1 = Q_1^{\mathrm{op}}$, see the remarks preceding Definition \ref{def:Kirpacketsreal}. Conjugating by $K$, we can assume that $Q_1$ is defined over $\RR$. There is a (normalized) real parabolic induction functor
$$\Ind^{G_{\RR}}_{Q_{1,\RR}}: M(\fl_1,L_{1,c}) \to M(\fg,K).$$
This functor is exact. It preserves infinitesimal character and unitarity. 

\begin{definition}\label{def:Kirpacketsreal}
Suppose $\Gamma=(L,\widetilde{\OO}_L,\OO_{L,\theta}, \mathfrak{q},\chi)$ is an integral minimal birational induction datum (cf. Definition \ref{def:inductiondatumreal}). The \emph{Kirillov packet} attached to $\Gamma$ is the finite set of irreducible $(\fg,K)$-modules
\begin{align*}
    \Pi_{\Gamma}^{\mathsf{Kir}}(G_{\RR}) := \{X \in \Pi(G_{\RR}) \mid \ &\exists \ X_0 \in \mathrm{Unip}_{(\widetilde{\OO}_L,\OO_{L,\theta})}(L_{\RR}) \text{ such that}\\
    &X \text{ is a composition factor in } \Ind^{G_{\RR}}_{L_{1,\RR}}(\mathcal{R}^{\fl_1,L_{1,c}}_{\fq,\widetilde{L}_c})^{d(\Gamma)} (X_0 \otimes \chi)\}
\end{align*}
If $d\chi=0$, the constitutents of the packet  $\Pi_{\Gamma}^{\mathsf{Kir}}(G_{\RR})$ are called \emph{unipotent representations}.
\end{definition}

\begin{rmk}
If we assume Conjecture \ref{conj:unitarity} is true, then the induced modules
$$\Ind^{G_{\RR}}_{L_{1,\RR}}(\mathcal{R}^{\fl_1,L_{1,c}}_{\fq,\widetilde{L}_c})^{d(\Gamma)} (X_0 \otimes \chi)$$
appearing in the definition above are unitary. Hence the representations in $\Pi_{\Gamma}^{\mathsf{Kir}}(G_{\RR})$ are unitary as well (and the phrase `composition factor' can be replaced with `direct summand').
\end{rmk}

\begin{rmk}
It is worth highlighting a few special cases of Definition \ref{def:Kirpacketsreal}.
\begin{itemize}
    \item Assume $L$ is a maximal torus in $K$. Then $\widetilde{\OO}_L = \{0\}$, $\OO_{L,\theta}=\{0\}$, $\mathfrak{q}$ is a Borel subalgebra of $\mathfrak{g}$, and $\chi$ is a ($\rho$-shifted) algebraic character of $L$. The packet $\Pi_{\Gamma}^{\mathsf{Kir}}(G_{\RR})$ is a singleton consisting of the unique (limit) of discrete series representation with Harish-chandra parameter $(\mathfrak{q},\chi)$.
    \item Assume $L$ is the complexification of a split maximal torus. Then $\widetilde{\OO}_L = \{0\}$, $\OO_{L,\theta}=\{0\}$, $\mathfrak{q} = \fh$, and $\chi$ is a unitary character of $L_{\RR}$. The packet $\Pi_{\Gamma}^{\mathsf{Kir}}(G_{\RR})$ is the set of direct summands of the unitary principal series representation $\Ind^{G_{\RR}}_{B_{\RR}}\chi$.
    
    \item Assume $G_{\RR}$ is a complex connected reductive algebraic group, regarded as a real group by restriction of scalars. Then Definition \ref{def:Kirpacketsreal} is equivalent to Definition \ref{def:Kirpackets}.
\end{itemize} 

\end{rmk}

In the form given above, it is not clear how (if at all) Definition \ref{def:Kirpacketsreal} qualifies as an `Orbit Method'. We have defined the Kirillov packets but not the co-adjoint covers to which they are attached. What is needed is a map $\mathcal{V}$ which assigns a a co-adjoint cover (or collection thereof) to each induction datum $\Gamma \in \Omega_{m,int}(G_{\RR})$. If $\Gamma$ is unipotent (i.e. if $d\chi=0$), then $\mathcal{V}(\Gamma)$ should correspond (via Proposition \ref{prop:KostantSekiguchi}) to the associated variety of the modules $\Ind^{G_{\RR}}_{L_{1,\RR}}(\mathcal{R}^{\fl_1,L_{1,c}}_{\fq,\widetilde{L}_c})^{d(\Gamma)} (X_0 \otimes \chi)$. By analyzing the effect of real parabolic and cohomological induction on associated varieties, it should not be difficult to define such a map, but we will not do so here.

\printbibliography

@article{IM,
    title={On some Bruhat decomposition and the structure of the Hecke rings of $p$-adic Chevalley groups},
    author={Iwahori, N. and Matsumoto, H.},
    journal={Inst. Hautes \'{E}tudes Sci. Publ. Math.},
    year={1965},
    pages={5--48},
    volume={25},
    %number={2},
}

@incollection {Vogan1991,
    AUTHOR = {Vogan, D.},
     TITLE = {Associated varieties and unipotent representations},
 BOOKTITLE = {Harmonic analysis on reductive groups},
    SERIES = {Progr. Math.},
    VOLUME = {101},
    pages  = {315--388},
 PUBLISHER = {Birkh\"auser},
      YEAR = {1991}
}

@article {KostantRallis1971,
    AUTHOR = {Kostant, B. and Rallis, S.},
     TITLE = {Orbits and representations associated with symmetric spaces},
   JOURNAL = {Amer. J. Math.},
  FJOURNAL = {American Journal of Mathematics},
    VOLUME = {93},
      YEAR = {1971},
      ISSN = {0002-9327},
   MRCLASS = {22E45 (53C35)},
  MRNUMBER = {0311837},
MRREVIEWER = {W. T. van Est},
       DOI = {10.2307/2373470},
       URL = {https://doi.org/10.2307/2373470},
}

@book {Vogan1981,
    AUTHOR = {Vogan, D.},
     TITLE = {Representations of real reductive {L}ie groups},
    SERIES = {Progr. Math.},
    VOLUME = {15},
 PUBLISHER = {Birkh\"auser},
      YEAR = {1981},
}

@book {Dimca,
    AUTHOR = {Dimca, A.},
     TITLE = {Sheaves in Topology},
    SERIES = {Universitext},
 PUBLISHER = {Springer},
      YEAR = {2004},
}

@book{Arthur2013,
    AUTHOR = {Arthur, J.},
     TITLE = {The Endoscopic Classification of Representations Orthogonal and Symplectic
Groups},
    SERIES = {American Mathematical Soc. Colloquium Publications},
    VOLUME = {61},
      YEAR = {2013},
}

@article{LusztigVogan,
   author={Lusztig, G. and Vogan, D.},
   title={Singularities of closures of $K$-orbits on flag manifolds},
   journal={Invent. Math.},
   volume={71},
   date={1983},
   number={2},
   pages={365--379},
%   issn={0020-9910},
%   review={\MR{689649 (84h:14060)}},
%   doi={10.1007/BF01389103},
}

@article{BeilinsonBernsteinDeligne,
   author={Beilinson, A. and Bernstein, J. and Deligne, P.},
   title={Faisceaux pervers},
   language={French},
   conference={
      title={Analysis and topology on singular spaces, I},
      address={Luminy},
      date={1981},
   },
   book={
      series={Ast\'erisque},
      volume={100},
      publisher={Soc. Math. France},
      place={Paris},
   },
   date={1982},
   pages={5--171},
}

@incollection{Borel1979,
title={Automorphic L-functions},
author={Borel, A.},
booktitle={Automorphic representations and L-functions},
series={Proc. Sympos. Pure Math.},
year={1979},
volume={33},
number={2},
pages={27--61}
}

@book {Vogan1987,
    AUTHOR = {Vogan, D.},
     TITLE = {Unitary representations of reductive {L}ie groups},
    SERIES = {Ann. of Math. Stud.},
    VOLUME = {118},
 PUBLISHER = {Princeton University Press},
      YEAR = {1987},
}

@book {KnappVogan1995,
    AUTHOR = {Knapp, A. and Vogan, D.},
     TITLE = {Cohomological induction and unitary representations},
    SERIES = {Princeton Mathematical Series},
    VOLUME = {45},
 PUBLISHER = {Princeton University Press, Princeton, NJ},
      YEAR = {1995},
     PAGES = {xx+948},
      ISBN = {0-691-03756-6},
   MRCLASS = {22E46 (22-02 22E47)},
  MRNUMBER = {1330919},
MRREVIEWER = {William M. McGovern},
       DOI = {10.1515/9781400883936},
       URL = {https://doi.org/10.1515/9781400883936},
}

@article {Sekiguchi1987,
    AUTHOR = {Sekiguchi, J.},
     TITLE = {Remarks on real nilpotent orbits of a symmetric pair},
   JOURNAL = {J. Math. Soc. Japan},
  FJOURNAL = {Journal of the Mathematical Society of Japan},
    VOLUME = {39},
      YEAR = {1987},
    NUMBER = {1},
     PAGES = {127--138},
      ISSN = {0025-5645},
   MRCLASS = {53C35 (17B20)},
  MRNUMBER = {867991},
MRREVIEWER = {Juan Tirao},
       DOI = {10.2969/jmsj/03910127},
       URL = {https://doi.org/10.2969/jmsj/03910127},
}

@InProceedings{Kostant1970,
author="Kostant, B.",
title="Quantization and unitary representations",
booktitle="Lectures in Modern Analysis and Applications III",
year="1970",
publisher="Springer",
address="Berlin, Heidelberg",
pages="87--208"
}

@article{Beauville2000,
	year  = {2000},
	publisher = {Springer Nature},
	volume = {139},
	number = {3},
	author = {Beauville, A.},
	pages = {541--549},
	title = {Symplectic singularities},
	journal = {Invent.\ Math.}
}

@misc{equivalentdefinitions,
    eprint={2108.05788},
    archivePrefix={arXiv},
    primaryClass={math.RT},
	year  = {2021},
	author = {Arancibia, N. and Adams, J. and Mezo, P.},
	title = {Equivalent definitions of Arthur packets for real classical groups}
}

@misc{Losev4,
    eprint={1605.00592},
    archivePrefix={arXiv},
    primaryClass={math.RT},
	year  = {2016},
	author = {Losev, I.},
	title = {Deformations of symplectic singularities and Orbit method for semisimple Lie algebras},
}

@incollection{Arthur1983,
author={Arthur, J.},
title={On some problems suggested by the trace formula},
booktitle={Lie Group Representations II},
volume={1041},
publisher={Springer-Verlag},
year={1983},
pages={1--49},
}

@incollection{Vogan1992,
author={Vogan, D.},
title={Unitary representations of reductive Lie groups and the orbit method},
pages={87--114},
booktitle={New Developments in Lie Theory and Their Applications},
publisher={Birkh\"auser},
address={Boston-Basel-Berlin},
date={1992},
}

@article{Kirillov1962,
author={Kirillov, A.},
title={Unitary representations of nilpotent Lie groups},
journal={Uspehi Mat.\ Nauk.},
volume={17},
date={1962},
pages={57--110},
}

@article{KnappZuckerman1982,
   author={A. Knapp and G. Zuckerman},
   title={Classification of irreducible tempered representations of
   semisimple groups},
   journal={Ann. of Math. (2)},
   volume={116},
   date={1982},
   number={2--3},
   pages={389--501},
}

@book{AdamsBarbaschVogan,
author={Adams, J. and Barbasch, D. and Vogan, D.}, 
title={The Langlands Classification and Irreducible Characters for Real Reductive Groups}, 
publisher={Birkh\"auser},
date={1992},
}

@book{Knapp,
author={Knapp, A.},
title={Representation Theory of Semisimple Groups: An Overview Based on Examples},
publisher={Princeton University Press},
date={1986},
}

@misc{Mitya2020,
    title={On the affinization of a nilpotent orbit cover},
    author={Matvieievskyi, D.},
    year={2020},
    eprint={2003.09356},
    archivePrefix={arXiv},
    primaryClass={math.RT}
}

@article{LusztigSpaltenstein,
author={Lusztig, G. and Spaltenstein, N.},
title={Induced unipotent classes},
journal={J.~London Math.\ Soc.(2)},
volume={19},
date={1979},
}

@article{Duflo1977,
author={Duflo, M.},
title={Sur la classification des id\'eaux primitifs dans l'alg\`ebre enveloppante d'une alg\`ebre de Lie semi-simple},
journal={Ann.\ of Math.},
pages={107--120},
volume={105},
date={1977},
}

@article{Voganorbit,
   author={Vogan, David A., Jr.},
   title={The method of coadjoint orbits for real reductive groups},
   conference={
      title={Representation theory of Lie groups},
      address={Park City, UT},
      date={1998},
   },
   book={
      series={IAS/Park City Math. Ser.},
      volume={8},
      publisher={Amer. Math. Soc., Providence, RI},
   },
   date={2000},
   pages={179--238},
%   review={\MR{1737729}},
}

@misc{MBMat,
	author = {Mason-Brown, L. and Matvieievskyi, D.},
	title = {Unipotent ideals for Spin and Exceptional Groups},
	 eprint={2109.09124},
      archivePrefix={arXiv},
      primaryClass={math.RT}
}

@incollection{Langlands,
   author={Langlands, R.},
   title={On the classification of irreducible representations of real algebraic groups},
   booktitle={Representation theory and harmonic analysis on semisimple Lie
      groups},
    series={Math. Surveys Monogr.},
     volume={31},
     publisher={Amer. Math. Soc., Providence, RI},
   date={1989},
   pages={101--170},
}

@misc{LMBM,
      title={Unipotent Ideals and Harish-Chandra Bimodules}, 
      author={Losev, I. and Mason-Brown, L. and Matvieievskyi, D.},
      year={2021},
      eprint={2108.03453},
      archivePrefix={arXiv},
      primaryClass={math.RT}
}
\end{document}